\documentclass[11pt,reqno]{amsart}


\usepackage[a4paper,margin=3cm]{geometry}
  \usepackage[T1]{fontenc}
  \usepackage[utf8]{inputenc}
\usepackage{microtype}
\usepackage{graphicx}
  \usepackage{amssymb,mathtools,amsthm}
\usepackage{tikz}
\usepackage{caption}
\usepackage{subcaption}






\usepackage{hyperref}
\definecolor{colorlinks}{RGB}{0, 24, 168}
\definecolor{colorcites}{RGB}{124, 10, 2}
\hypersetup{
    colorlinks=true,
    linkcolor=colorlinks,
    citecolor=colorcites,
    urlcolor=colorlinks,
    pdfborder={0 0 0}
}

\newcounter{n}
\numberwithin{n}{section}
\theoremstyle{plain}
  \newtheorem{lemma}[n]{Lemma}
  \newtheorem{theorem}[n]{Theorem}
  \newtheorem*{theorem*}{Theorem}
  
  \newtheorem{corollary}[n]{Corollary}
\theoremstyle{definition}
  \newtheorem{definition}[n]{Definition}

\renewcommand\phi\varphi


\author{Piet Lammers}
\address{Institut des Hautes \'Etudes Scientifiques}
\email{lammers@ihes.fr}

\author{S\'{e}bastien Ott}
\address{D\'epartement de Math\'ematiques, Universit\'e de Fribourg}
\email{ott.sebast@gmail.com}


\title[Delocalisation and absolute-value-FKG in the SOS model]{Delocalisation and absolute-value-FKG\\in the solid-on-solid model}

 \makeatletter
 \@namedef{subjclassname@2020}{\textup{2020} Mathematics Subject Classification}
 \makeatother
\subjclass[2020]{82B20, 82B41 (Primary) 82B30 (Secondary)}

\keywords{Delocalisation, solid-on-solid model, discrete Gaussian model, effective interface model, FKG lattice condition, statistical mechanics}

\begin{document}
\begin{abstract}
The solid-on-solid model is a model of height functions,
introduced to study the interface separating the $+$ and $-$ phase in the Ising model.
The planar solid-on-solid model thus corresponds to the three-dimensional Ising model.
Delocalisation of this model at high temperature and at zero slope was first derived by Fr\"ohlich and Spencer,
in parallel to proving the Berezinskii-Kosterlitz-Thouless phase transition.
The first main result of this article consists of a simple, alternative proof of delocalisation of the solid-on-solid model.
In fact, the argument is more general: it works on any planar graph---not just the square lattice---and implies that the interface delocalises at any slope rather than exclusively at the zero slope.
The second main result, proved independently, is that the absolute value of the height function in this model satisfies the FKG lattice condition.
This property is believed to be intimately linked to the (quantitative) understanding of delocalisation,
given the recent successes in the context of the square ice and (more generally) the six-vertex model, and it has already been used elsewhere in a new proof of the BKT transition.
The new FKG inequality is shown to hold true for both the solid-on-solid model as well as for the discrete Gaussian model,
which in this article implies that the two notions of delocalisation,
namely delocalisation in finite volume and delocalisation of shift-invariant Gibbs measures,
coincide.

\end{abstract}
\maketitle


\section{Introduction}

\subsection{Background on phase coexistence and interfaces}

The analysis of phase coexistence has a central role in classical statistical physics.
The phenomenon is omnipresent in both natural systems at the transition point
(such as for example the liquid-gas transition of water),
as well as in the (idealised) mathematical models designed to study these systems.
The simplest and perhaps most well-known such model is the Ising model,
which has been studied extensively since its inception in 1920.
When two phases (are forced to) coexist,
one can study a derived object, namely the interface separating the two phases.
Few rigorous results on the macroscopic behaviour of the interface in the Ising model are known,
and it is expected that this behaviour depends strongly 
on three fundamental parameters:
the dimension,
the inverse temperature,
and the inclination at which boundary conditions are enforced.

\begin{figure}
	\centering
  \newcommand{\fw}{.4\textwidth}
  \newcommand\sign[3]{
  	\draw[fill=white] (#1,#2) circle (.5);
  	\node at (#1,#2) {$#3$};
  }
	\begin{subfigure}{\fw}
		\centering
		\begin{tikzpicture}[scale=0.5]
		\draw[very thin, gray] (0,0) grid (7,7);
		\draw[thick] (0,3.5)--(7,3.5);
		\draw[thick, ->] (3.5,3.5)--(3.5,4.5);
		\draw (3.5,4.5) node[left] {$n$};
		\foreach \x in {4,5,6,7}{
		    \sign{0}{\x}{+}
		    \sign{7}{\x}{+}
		}
		\foreach \x in {0,1,2,3}{
		    \sign{0}{\x}{-}
		    \sign{7}{\x}{-}
		}
		\foreach \x in {1,2,3,4,5,6}{
			\sign{\x}{7}{+}
			\sign{\x}{0}{-}
		}
		\end{tikzpicture}
		\caption*{\(n=(0,1)\)}
	\end{subfigure}
  \begin{subfigure}{\fw}
		\centering
		\begin{tikzpicture}[scale=0.5]
		\draw[very thin, gray] (0,0) grid (7,7);
		\draw[thick, black] (0,1.75)--(7,5.25);
		\draw[thick, ->] (3.5,3.5)--(3.0527864045 , 4.394427191 );
		\draw (3.0527864045 , 4.394427191 ) node[left] {$n$};
		\foreach \x in {2,3,4,5,6,7}{
		    \sign{0}{\x}{+}
		    \sign{7}{7-\x}{-}
		}
		\foreach \x in {0,1}{
		    \sign{0}{\x}{-}
		    \sign{7}{7-\x}{+}
		}
		\foreach \x in {1,2,3,4,5,6}{
		    \sign{\x}{7}{+}
		    \sign{\x}{0}{-}
		}
		\end{tikzpicture}
		\caption*{\(n=\frac{1}{\sqrt{5}}(-1,2)\)}
	\end{subfigure}
	\caption{Dobrushin boundary conditions and the interface normal vector}
	\label{fig:Ising_tilt_Dobr}
\end{figure}

To illustrate the rich behaviour of the interface, we shall give a concise, informal account of the 
expected phase diagram.
Let $d\in\mathbb N$ denote the dimension of the underlying square lattice $\mathbb Z^d$,
write $\beta_c=\beta_c(d)>0$ for the critical inverse temperature,
let $\beta>\beta_c$ denote the inverse temperature of interest,
and let $n\in S^{d-1}$ denote the interface normal vector for the Dobrushin boundary condition
(see Figure~\ref{fig:Ising_tilt_Dobr}).
Naturally, the interface is of dimension $d-1$.
We shall fix boundary conditions outside a box of sides $N$,
and consider the variance of the ``height'' of the interface at the centre vertex.
This number---the variance---is called the \emph{fluctuation} of the interface.
The expected behaviour is as follows.
\begin{itemize}
	\item In dimension $d=2$, the fluctuations are always of order $N$ for all temperatures
	and for all inclinations.
	The interface is one-dimensional, and its macroscopic behaviour is therefore 
	linked to the theory of Markov chains and Brownian motion.
	\item In dimension $d=3$, the expected behaviour depends on $\beta$ and $n$.
	For $n=(0,0,1)$ and for $\beta$ large, the fluctuations are uniformly bounded,
	while for $\beta$ close to $\beta_c$ or for $n$ not parallel to an edge of the underlying lattice,
	the fluctuations are expected to be of order $\log N$.
	\item In dimension $d\geq 4$, the fluctuations should always be uniformly bounded. 
\end{itemize}
When the fluctuations are bounded uniformly, it is said that the interface
is \emph{localised} or \emph{smooth},
and the interface is said to be \emph{delocalised} or \emph{rough} otherwise.
The two-dimensional case is well-understood
with a full invariance principle towards Brownian motion for all \(\beta>\beta_c\);
see~\cite{Campanino+Ioffe+Velenik-2003} and the references therein. 
In three dimensions, the only available result is that of Dobrushin~\cite{Dobrushin-1972} who proved (in particular)
that fluctuations are uniformly bounded in \(N\) whenever \(n=(0,0,1)\) and for \(\beta\) sufficiently large;
see also~\cite{vanBeijeren-1975}
(although results have been obtained at zero temperature through
a connection with the dimer model).
The cases of $\beta$ close to $\beta_c$ and that of $n$ not equal to $(0,0,1)$
are open.
The higher-dimensional problem is open,
except that the result of Dobrushin generalises to higher dimensions.

\begin{figure}
	\centering
  \newcommand{\fw}{.4\textwidth}
	\begin{subfigure}{\fw}
		\centering
		\includegraphics[width = 4.5cm]{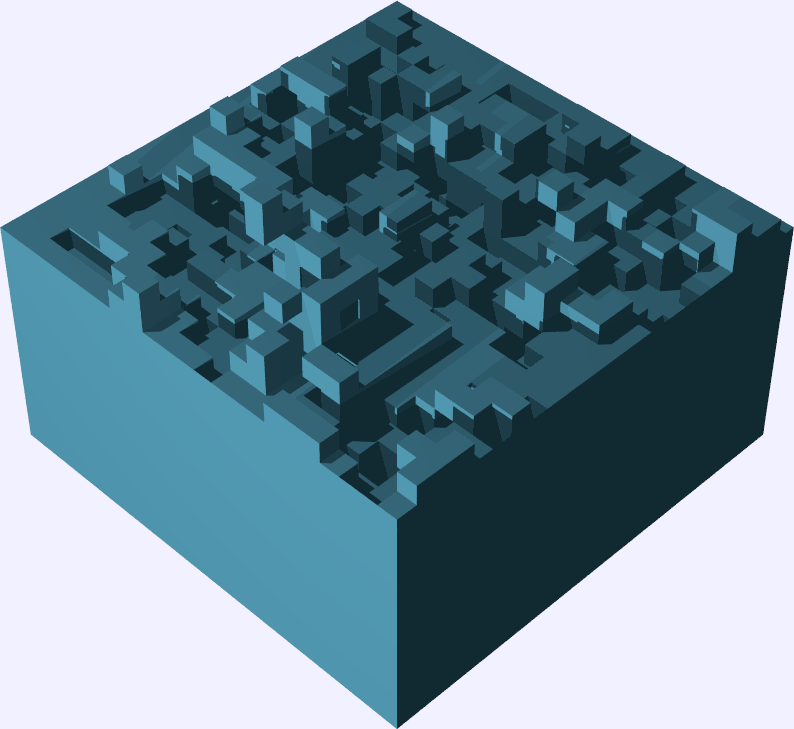}
		\caption*{\(\beta = 0.24\)}
	\end{subfigure}\begin{subfigure}{\fw}
		\centering
		\includegraphics[width = 4.5cm]{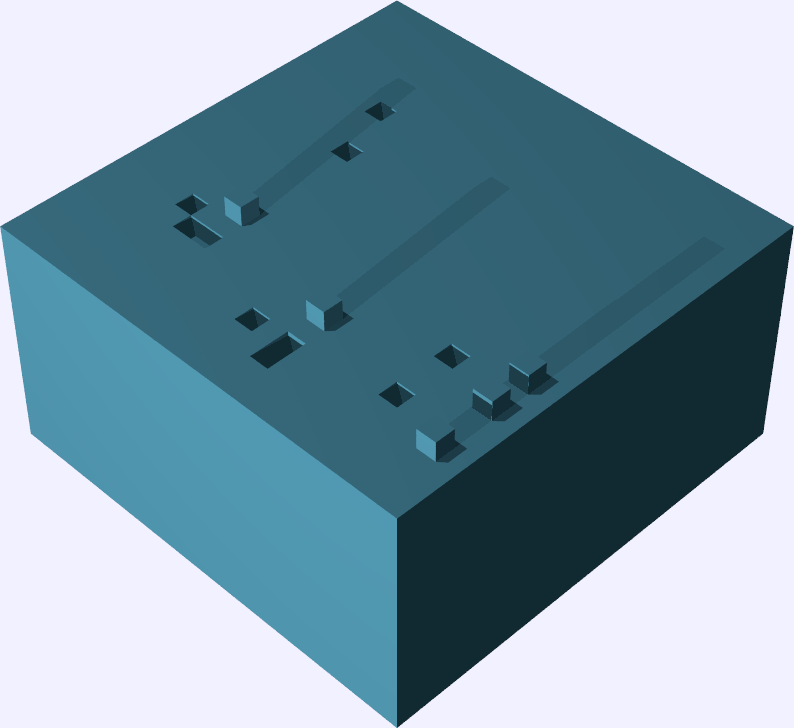}
		\caption*{\(\beta = 0.6\)}
	\end{subfigure}
	\caption{Interface of the Ising model for $d=3$, $n=(0,0,1)$, and $N=22$}
	\label{fig:Ising_interface}
\end{figure}

\subsection{Effective models and integer-valued height functions}

To obtain a better understanding of the problem,
a family of effective models for the interface has been introduced:
that of \emph{integer-valued height functions}.
Integer-valued height functions, or simply \emph{height functions},
are random functions $\phi:\mathbb Z^d\to\mathbb Z$ whose gradient is typically not too large 
and whose graph $\operatorname{Graph}(\phi)\subset\mathbb Z^d\times\mathbb Z$
has a geometrical interpretation as a random surface or interface.
The archetypical example of such a model is the solid-on-solid model,
which will be at the centre of this article.
The solid-on-solid model is obtained from the Ising model
by sending the interaction strength along one of the axes to infinity~\cite{Bricmont+Fontaine+Lebowitz-1982}.
The qualitative behaviour of the interface in the Ising model is expected to
match that of the solid-on-solid model.
In dimension one, effective interfaces are essentially random walks and their macroscopic behaviour 
is thus trivial. 
In dimension three and higher,
localisation was proven rigorously for the solid-on-solid model and the discrete Gaussian model,
using positivity of the transfer operator and reflection positivity through the infrared bound
(see~\cite{Frohlich+Simon+Spencer-1976,Frohlich+Israel+Lieb+Simon-1978} for the discrete Gaussian model
and~\cite{Bricmont+Fontaine+Lebowitz-1982} for the solid-on-solid model).
One should also mention~\cite{Gopfert+Mack-1981} where strong clustering properties (exponential decay of correlations) of the discrete Gaussian in dimension three are derived at all temperatures.


We now turn the discussion towards dimension two,
which corresponds to the three-dimensional Ising model.
To aid the discussion, we shall introduce some informal notation.
The letter \(\mu\) denotes a probability measure 
with the random height function $\phi$ such that the law of its gradient $\nabla\phi$
is shift-invariant in $\mu$ (that is, gradient observables are invariant under lattice translations).
The tilt of the interface is conveniently parametrised by the \emph{slope} of \(\mu\):
the unique functional \(u\in (\mathbb R^2)^*\)
such that
\(\mu(\phi_y-\phi_x) = u(y-x)\) for any $x,y\in\mathbb Z^2$.
For example, the interface normal vector $(0,0,1)$
of the Ising model
corresponds to the zero slope $0\in(\mathbb R^2)^*$.
Delocalisation can be characterised in two distinct ways.
First, one can study the presence of such measures $\mu$ which are Gibbs for the solid-on-solid model.
The model is said to \emph{delocalise} when no shift-invariant Gibbs measures exist.
This form of delocalisation can be described as \emph{soft},
because it is connected to a range of qualitative arguments which are not expected 
to yield quantitative results directly.
We shall prove that the first form of delocalisation coincides with \emph{delocalisation in finite volume},
which asserts that fluctuations of the height at the zero vertex tend to infinity 
as boundary conditions are enforced further and further away from this centre vertex.
The latter form of delocalisation is generally of higher interest and more akin to the notion of delocalisation introduced
for the Ising model.
We shall speak of \emph{quantitative delocalisation} whenever there is an explicit lower bound 
on the fluctuations which is of the expected order.
While we shall not prove quantitative results directly, 
we derive properties which have led to quantitative delocalisation in other models.
Delocalisation is expected to provide the stepping stone towards quantitative delocalisation:
for the model of square ice, for example,
it has been proven that the delocalisation must be logarithmic whenever there is delocalisation~\cite{Duminil-Copin+Harel+Laslier+Raoufi+Ray-2019}.
This result was proven separately from the delocalisation itself,
and in that sense consists of a dichotomy. After a first version of this paper appeared on the arXiv, similar dichotomy results have been obtained by the first author for the class of height functions considered in Section~\ref{sec:abs_FKG}, see~\cite{Lammers-2022}.
The phase diagram of delocalisation in the solid-on-solid model is expected to match that of the Ising model:
the localisation-delocalisation transition occurs at the zero slope $0\in(\mathbb R^2)^*$,
while the model is predicted to delocalise at all temperatures for nonzero slopes.
Compare this to the discrete Gaussian model,
where localisation occurs at the same temperature for each integer slope,
because there exist simple transformations that allow one to modify a measure in order to change its slope~\cite{Shlosman-1983}.
Indeed, start from any Gibbs measure, add an integer-valued, deterministic, harmonic
height function to each sample, and observe that the law so obtained is also Gibbs for the same specification.
In particular, if $\mu$ is a Gibbs measure at slope $0$ and $\psi$ the deterministic function given by
$\phi_x=u(x)$ for $u\in(\mathbb Z^2)^*$, then the resulting Gibbs measure has slope $u$.



Over the past two decades, the study of integer-valued height functions 
has developed into a much broader field.
Several models of two-dimensional height functions 
are linked directly to combinatorics, integrable probability, and complex geometry.
Let us therefore give a (non-exhaustive) overview of some of the results which are related
to the localisation-delocalisation transition.
\begin{enumerate}
	\item
	The most complete picture has been obtained for the dimer model,
	which has an alternative, combinatorial interpretation as a model of random tilings.
	It is known for this model that its scaling limit is the Gaussian free field~\cite{Kenyon-2001}. This result has been extended to interacting dimer models in~\cite{Giuliani+Mastropietro+Toninelli-2017}.
	Moreover, it is known that under a limit in the boundary conditions,
	the local measure converges to the unique shift-invariant gradient Gibbs measure at the correct 
	slope in the case of the hexagonal dimer model~\cite{Aggarwal-2019}.
	Quantitative delocalisation
	(that is, logarithmic delocalisation)
	is indeed implied by these much stronger results.
	\item The general analysis of height functions was initiated by Sheffield.
	In his seminal work \emph{Random surfaces}~\cite{Sheffield-2006} he proved, 
	amongst other things, that height functions on planar graphs delocalise 
	for any slope that is not in the dual lattice of the vertex set of the graph.
	This implies, for example, that the dimer models as well as the square ice delocalise at any non-extremal slope (that is, slopes which do not freeze the configuration).
	For the result on the square ice, we also refer to~\cite{Chandgotia+Peled+Sheffield+Tassy-2020}.
	It was later shown that conditional on this delocalisation,
	the delocalisation is logarithmic:
	the previously mentioned dichotomy result~\cite{Duminil-Copin+Harel+Laslier+Raoufi+Ray-2019}.
	These results have recently been extended to the more general six-vertex model 
	for $a=b=1$ and $1\leq c\leq2$, see~\cite{Duminil-Copin+Karrila+Manolescu+Oulamara-202-}.
	This is notable especially because the six-vertex model consists of non-planar interactions (the heights interact through the diagonals of the square lattice).
	\item Several results are available for quantitative delocalisation of $1$-Lipschitz functions on the triangular
	lattice, which is intimately linked to the Loop~$O(n)$ model with the parameter $n=2$.
	In particular, we mention~\cite{Duminil-Copin-Glazman-Peled-Spinka-2017}
	for Nienhuis' critical point (the parameter $x=1/\sqrt 2$)
	and~\cite{Glazman+Manolescu-2019} for the uniform case.
	\item Quantitative delocalisation for the solid-on-solid and discrete Gaussian model
	for small \(\beta>0\) was derived using a connection to the Coulomb gas,
	utilising a multi-scale expansion~\cite{Frohlich+Spencer-1981}.
	The focus of that article is the Berezinskii-Kosterlitz-Thouless phase transition,
	which is strongly related to the localisation-delocalisation transition.
	The methods of~\cite{Frohlich+Spencer-1981} have recently been extended
	to the case of tilted boundary conditions in~\cite{garban2020statistical}.
	\item Delocalisation in the shift-invariant sense was recently derived for a large class of models on planar graphs which have a maximum degree of three~\cite{Lammers-2021}.
	The only condition is that $\beta$ is sufficiently small;
	for the solid-on-solid and the discrete Gaussian models, one requires $0<\beta\leq\log 2$.
	\item
	After a first draft of the present paper was completed,
	convergence to the Gaussian free field of the discrete Gaussian model in two dimensions
	at very large temperature was obtained in~\cite{bauerschmidt2022discrete}.
	This new result relies on the application of finite range decompositions 
	of Gaussian measures, as well as renormalisation group transformations.
\end{enumerate}

It is worth mentioning that all results are established for planar graphs,
with the exception of the delocalisation result on the six-vertex model,
where the delocalisation relies (in part) on some favourable combinatorial properties of the model.
Extensions to non-planar models without such combinatorial features have not been found to date. 
Let us finally mention that localisation at a high inverse temperature
is generally handled through an adaptation of Dobrushin's argument for the Ising model;
see~\cite{brandenberger1982decay} for the original argument and the survey~\cite{Velenik-2006} for a review and for further references therein.


\subsection{An informal description of the main results}

We prove three main results.
First, we demonstrate that there exist no Gibbs measures with a shift-invariant 
gradient for the solid-on-solid model whenever $\beta>0$ is sufficiently small (Theorem~\ref{theorem:delocalisation_symmetric}
and Theorem~\ref{theorem:delocalisation_asymmetric}).
The proof works on any shift-invariant planar graph,
and does not require one to work on the square lattice.
The result thus implies delocalisation at any slope,
and not exclusively at the zero slope.
Second, we demonstrate that the absolute value of the height 
function satisfies the FKG lattice condition whenever
nonnegative boundary conditions are imposed (Theorem~\ref{thm:FKG_main_results}).
The property holds true for both the solid-on-solid model 
as well as for the discrete Gaussian model,
and may be of independent interest given the recent successes that it has 
had in the context of the square ice and the six-vertex model in 
proving logarithmic delocalisation~\cite{Duminil-Copin+Harel+Laslier+Raoufi+Ray-2019,Duminil-Copin+Karrila+Manolescu+Oulamara-202-}. 
The general strategy of these articles stems from an older work~\cite{Duminil-Copin+Sidoravicius+Tassion-2017}
where the FKG lattice condition is used to prove a dichotomy for the quantitative behaviour 
at and off criticality in the random-cluster model.
We do not prove quantitative results,
and use the new property instead to build a bridge between the two notions of delocalisation.
In other words, we use the property to prove our third main result (Theorem~\ref{thm:fvdelocalisation}),
namely that the fluctuations of the solid-on-solid model tend
to infinity when zero boundary conditions are imposed,
in the limit of the size of the box which is centred around the vertex of interest.
After a first version of this paper appeared online,
the super-Gaussian property (Theorem~\ref{thm:FKG_main_results}) was used in~\cite{van2021elementary}
to provide an alternative derivation of the BKT transition in the XY model 
through its connection to a natural height function fitting the framework of~\cite{Lammers-2021}.

\subsection{Organisation of the article}
Section~\ref{sec:main} contains the formal statements of the main results,
as well as the formal definitions required to state them.
Sections~\ref{section:background}--\ref{section:slope}
are dedicated to proving the first main result (Theorem~\ref{theorem:delocalisation_symmetric}
and Theorem~\ref{theorem:delocalisation_asymmetric}).
Section~\ref{section:background} describes some results from the work of Sheffield,
and introduces some useful constructions.
Sections~\ref{section:height_percolation} and~\ref{section:slope}
contain the proofs of Theorem~\ref{theorem:delocalisation_symmetric}
and Theorem~\ref{theorem:delocalisation_asymmetric} respectively.
The remaining two sections are very different in spirit
and do not rely on the shift-invariant framework.
Section~\ref{sec:abs_FKG} is dedicated to the proof 
of the FKG inequality (Theorem~\ref{thm:FKG_main_results}),
by exploiting a connection with the Ising model.
Section~\ref{sec:finite_vol} explains the connection between the two notions of delocalisation
(Theorem~\ref{thm:fvdelocalisation}), and is fairly general in spirit.

\section{Main results}
\label{sec:main}
\subsection{Height function models}
\label{sec:HF_models}

The purpose of this subsection is to efficiently introduce the mathematical formalism for the analysis of height functions.
The notation is broadly the same as in the works of Georgii~\cite{Georgii-1988} and Sheffield~\cite{Sheffield-2006}.
Most notably, we introduce the \emph{super-Gaussian property}
for potential functions,
which leads to the FKG lattice condition for the absolute value of the height function
in the models induced by such potentials.

\begin{definition}[shift-invariant planar graphs]
	A \emph{shift-invariant planar graph} is a pair $(\mathbb G,\mathcal L)$ where $\mathcal L$ is a lattice in $\mathbb R^2$ and $\mathbb G=(\mathbb V,\mathbb E)$ a connected graph which has a locally finite planar embedding in $\mathbb R^2$ that is invariant under the action of $\mathcal L$.
	The term \emph{maximum degree} has its usual meaning.
\end{definition}

\begin{definition}[height functions]
	A \emph{height function} is an integer-valued function on the vertex set $\mathbb V$.
	Write $\Omega$ for the set of height functions,
	and write $\mathcal F$ for the Borel $\sigma$-algebra of the product topology on $\Omega=\mathbb Z^\mathbb V$.
	Let $\mathcal F^\nabla$ denote the \emph{gradient $\sigma$-algebra},
	that is, the sub-$\sigma$-algebra of $\mathcal F$ generated by the height differences 
	$\phi_y-\phi_x$ with $x$ and $y$ ranging over $\mathbb V$.
	Identify each element $\theta\in\mathcal L$ with the map $\mathbb V\to\mathbb V,\,x\mapsto x+\theta$.
	For $A\in\mathcal F$ and $\theta\in\mathcal L$,
	let $\theta A$ denote the event $\{\phi\circ\theta:\phi\in A\}\in\mathcal F$.
	Write $\mathcal P(\Omega,\mathcal F)$ for the set of probability measures on $(\Omega,\mathcal F)$, and write $\mathcal P_\mathcal L(\Omega,\mathcal F)$ for the set of measures which are furthermore \emph{shift-invariant}, that is,
	measures $\mu$ which satisfy $\mu\circ\theta=\mu$ for any $\theta\in\mathcal L$.
	Write $\mathcal P_{\mathcal L(\nabla)}(\Omega,\mathcal F)$ for the set of measures which have a shift-invariant 
	gradient, that is, the set of measures $\mu$ which satisfy 
	$\mu|_{\mathcal F^\nabla}\circ\theta=\mu|_{\mathcal F^\nabla}$ for any $\theta\in\mathcal L$.
\end{definition}

\begin{definition}[potential functions, Hamiltonians]
\label{definition:Hamiltonian}
Write $\lambda$ for the counting measure on $\mathbb Z$.
A \emph{potential function} is a convex function $V:\mathbb Z\to\mathbb R$ with the property that the \emph{edge transition distribution} given by $e^{-\beta V}\lambda$ is a nontrivial finite measure
for any $\beta>0$.
Such a function is called \emph{symmetric} whenever $V(x)=V(-x)$ for all $x\in\mathbb Z$
and \emph{Lipschitz} whenever $|V(x+1)-V(x)|\leq 1$ for all $x\in\mathbb Z$.
A potential function $V$ is called \emph{super-Gaussian} whenever its second derivative $V^{(2)}:\mathbb Z\to\mathbb R$ defined by
\[
V^{(2)}(k):=V(k+1)-2V(k)+V(k-1)
\] is non-increasing over the nonnegative integers.
A \emph{potential} is a family $V=(V_{xy})_{xy}$ of potential functions indexed by 
the directed edges of the graph $\mathbb G$
which satisfies the following two properties:
\begin{enumerate}
	\item $V_{xy}(z)=V_{xy}(-z)$ for all $xy\in\mathbb E$ and $z\in\mathbb Z$ (\emph{consistency}),
	\item $V_{xy}=V_{(\theta x)(\theta y)}$ for all $xy\in\mathbb E$ and $\theta\in\mathcal L$ (\emph{shift-invariance}).
\end{enumerate}
A potential $V$ is called \emph{symmetric}, \emph{Lipschitz}, or \emph{super-Gaussian} whenever all potential functions are symmetric, Lipschitz, or super-Gaussian respectively.
For any finite set $\Lambda\subset\mathbb V$,
introduce the associated \emph{Hamiltonian} defined by
\[
	H_\Lambda:
	\Omega\to\mathbb R,\,
	\phi\mapsto\sum\nolimits_{xy\in\mathbb E(\Lambda)}V_{xy}(\phi_y-\phi_x),
\]
where $\mathbb E(\Lambda)$ denotes the set of undirected edges in $\mathbb E$ that have at least one endpoint in $\Lambda$.
Note that we must direct each edge for the definition to make sense,
but that the value of each term does not depend on the ordering chosen due to the consistency requirement on the potential $V$.
\end{definition}

The prime example of a symmetric Lipschitz potential is of course the solid-on-solid model,
which is defined by $V_{xy}(\cdot):=|\cdot|$ for any $xy\in\mathbb E$.

\begin{definition}[specifications, Gibbs measures]
\label{def:specifications_gibbs_measures}
Let $\beta$ denote a fixed strictly positive real number,
called the \emph{inverse temperature}.
For any height function $\phi$ and for any finite set $\Lambda\subset\mathbb V$, write 
$\gamma_\Lambda(\cdot,\phi)$ for 
the probability measure 
defined by 
\begin{equation}
\label{eq:spec_def}
	\gamma_\Lambda(\cdot,\phi)
	:=
	\frac1{Z_\Lambda(\phi)}e^{-\beta H_\Lambda}(\delta_{\phi|_{\mathbb V\smallsetminus\Lambda}}\times\lambda^\Lambda),
\end{equation}
where $Z_\Lambda(\phi)$ denotes a suitable normalisation constant.
Thus, to sample from $\gamma_\Lambda(\cdot,\phi)$, set first the random height function equal to $\phi$ on the complement of $\Lambda$,
then sample its values on $\Lambda$ proportionally to $e^{-\beta H_\Lambda}$.
The family $(\gamma_\Lambda)_\Lambda$ with $\Lambda$ ranging over the finite subsets of $\mathbb V$ is a \emph{specification}.
A measure $\mu\in\mathcal P(\Omega,\mathcal F)$ is called a \emph{Gibbs measure}
if $\mu=\mu\gamma_\Lambda$ for all finite $\Lambda\subset\mathbb V$.
Write $\mathcal G$ for the collection of Gibbs measures,
and $\mathcal G_\mathcal L$ for the collection 
of shift-invariant Gibbs measures,
that is, $\mathcal G_\mathcal L:=\mathcal G\cap\mathcal P_\mathcal L(\Omega,\mathcal F)$.
Finally, write $\mathcal G_{\mathcal L(\nabla)}:=\mathcal G\cap\mathcal P_{\mathcal L(\nabla)}(\Omega,\mathcal F)$
for the collection of Gibbs measures with a shift-invariant gradient.
\end{definition}

\subsection{Statement of the main results}

This subsection presents the main results of this article.
We first state two delocalisation results for Lipschitz potentials on planar graphs.
The results are stated in the shift-invariant infinite-volume setting:
this very much in the style of the work of Sheffield~\cite{Sheffield-2006}.
Next, we state a result which asserts that this notion of delocalisation coincides
with the notion of delocalisation in finite volume (with fixed boundary conditions).
The latter notion is generally considered to be of higher interest within the 
physics literature and in the context of the solid-on-solid model.
The gap between the two notions is bridged by another result,
which may be of independent interest:
namely that for super-Gaussian potentials,
the absolute value of the height function satisfies the FKG lattice condition.
This result is stated separately, at the end of this section.

\begin{theorem}[delocalisation]
	\label{theorem:delocalisation_symmetric}
	Let $(\mathbb G,\mathcal L)$ be a shift-invariant planar graph
	of maximum degree $m$, and
	let $V$ denote a symmetric Lipschitz potential.
	Then for any $0<\beta\leq (\log2)/m$,
	the associated height function model delocalises,
	in the sense that the set $\mathcal G_\mathcal L$ is empty.
\end{theorem}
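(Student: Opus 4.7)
The plan is to argue by contradiction, building on the Sheffield-style planar percolation framework assembled in Section~\ref{section:background}. Suppose that some $\mu \in \mathcal{G}_\mathcal{L}$ exists for a given $\beta \leq (\log 2)/m$. Since $V$ is symmetric, the involution $\sigma \colon \phi \mapsto -\phi$ preserves the specification $(\gamma_\Lambda)_\Lambda$ and therefore also $\mathcal{G}_\mathcal{L}$; by convexity I may average $\mu$ with its image under $\sigma$ and assume $\mu$ is $\sigma$-invariant. Shift-invariance then forces the marginal law of $\phi_x$ to depend only on the $\mathcal{L}$-orbit of $x$, and combined with $\sigma$-symmetry each such marginal is a symmetric distribution on $\mathbb{Z}$. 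The three shift-invariant random subsets $A^+ = \{\phi > 0\}$, $A^0 = \{\phi = 0\}$ and $A^- = \{\phi < 0\}$ therefore have $\mu$-densities $\rho$, $1 - 2\rho$ and $\rho$ respectively, for some $\rho \in [0, 1/2]$. The degenerate case $\rho = 0$ (that is, $\phi \equiv 0$ $\mu$-a.s.) is ruled out directly, because the single-site specification $\gamma_{\{x\}}$ is a nontrivial distribution on $\mathbb{Z}$.

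Next I would combine two inputs. The first is a planar percolation dichotomy for shift-invariant random subsets (recalled in Section~\ref{section:background}): on a planar graph, two equidistributed disjoint shift-invariant subsets separated by a third species cannot both support infinite clusters, by a Burton--Keane uniqueness on each side combined with a Zhang-style four-arm obstruction. By $\sigma$-symmetry, both $A^+$ and $A^-$ therefore consist $\mu$-a.s.\ of finite clusters only. The second input is the Lipschitz bound $|V(k+1) - V(k)| \leq 1$, used as a Peierls-type surgical tool. For each $x$ with $\phi_x < 0$, let $C_x \subset \mathbb{V}$ be its finite $A^-$-cluster and let $\partial_e C_x \subset \mathbb{E}$ denote its edge-boundary. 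Consider the map $T_x \colon \phi \mapsto \phi + \mathbf{1}_{C_x}$, which alters only the increments across $\partial_e C_x$, each by $\pm 1$. The Lipschitz property bounds the Radon--Nikodym cost of $T_x$ by $e^{\beta |\partial_e C_x|}$; since every vertex has degree at most $m$ and $\beta m \leq \log 2$, this is at most $2^{|\partial_e C_x|/m} \leq 2^{|C_x|}$. Combining this with the standard planar cluster enumeration (at most $c^\ell$ shapes of edge-boundary length $\ell$, by bounded degree) and assembling the surgeries over disjoint clusters should yield a strict density comparison $\mu(A^+) > \mu(A^-)$, contradicting the $\sigma$-symmetry of $\mu$.

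The main technical obstacle, and the step where I expect to lean on the machinery of Sections~\ref{section:background} and~\ref{section:height_percolation}, is the passage from the local, finite-cluster comparison to a global, shift-invariant one. Aggregating the Radon--Nikodym contributions of a positive density of disjoint cluster surgeries while preserving the threshold $(\log 2)/m$ requires a careful simultaneous treatment, and it is precisely at this stage that the planar enumeration constant and the Lipschitz--entropy budget must balance; the choice $\beta m \leq \log 2$ is where this balance has to be used sharply. The symmetry-based starting point employed here is unavailable at nonzero slopes, which is why the asymmetric case (Theorem~\ref{theorem:delocalisation_asymmetric}) is treated separately in Section~\ref{section:slope}, presumably via coupling with a tilted reference configuration.
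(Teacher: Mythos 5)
The first half of your plan matches the paper: contradiction from an ergodic $\mu\in\mathcal G_\mathcal L$, symmetry of $V$, and the planar no-phase-coexistence input showing that $\{\phi<0\}$ (and likewise $\{\phi>0\}$) does not percolate. Where you diverge is in what to do once this is established, and this is precisely where your proposal has a gap that I do not think can be closed.

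Your Peierls surgery $T_x\colon\phi\mapsto\phi+\mathbf 1_{C_x}$ costs, as you compute, a factor at most $e^{\beta|\partial_e C_x|}\le 2^{|C_x|}$. This grows exponentially in the \emph{volume} of the cluster, not its boundary, so the standard contour-counting entropy budget cannot absorb it; and more to the point, the Lipschitz bound only controls the Radon--Nikodym derivative in a two-sided fashion, so it never makes the surgered configuration \emph{more} likely than the original. Worse: you symmetrised $\mu$ at the outset, so $\mu(A^+)=\mu(A^-)$ is forced by construction, and what you call ``a strict density comparison $\mu(A^+)>\mu(A^-)$'' is not something a surgery can produce for that same $\mu$. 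The quantity that actually carries the contradiction is the \emph{conditional mean height}, and the threshold $\beta m\le\log 2$ is used in a completely different (and much softer) way than an energy--entropy balance over cluster shapes: it is a single-site tilting estimate, not a Peierls bound.

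What the paper does instead is condition on the exterior contour $\Gamma_n$ of $\{\phi\ge 0\}\cap\Lambda_n$ (which surrounds the root with high probability since $\{\phi<0\}$ does not percolate). On the contour, $\phi\ge 0$ by definition, and the Lipschitz bound gives $p_{k+1}\ge p_k/2$ for the single-site law, whence conditional on $\phi\ge 0$ the restriction $\phi|_\Gamma$ stochastically dominates an i.i.d.\ Bernoulli$(1/2)$ field $X|_\Gamma$ (Lemma~\ref{lemma:domination_simple}). Monotonicity of the specification in boundary conditions and the Markov property in the interior $\Delta$ then give $\mu(\phi_r\mid K_n)\ge(\alpha\gamma_\Delta)(X_r)=1/2$ by symmetry of $V$, contradicting $\mu(\phi_r)=0$ (which comes from Lemma~\ref{lemma:exploration} plus the non-percolation of both half-spaces). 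In short, the correct mechanism is a conditional stochastic domination against a reference Bernoulli field propagated through the contour, not a cluster-flipping entropy estimate; if you want to repair your proposal you should replace the surgery step with that domination argument, which is where $(\log 2)/m$ is genuinely sharp.
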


\begin{theorem}[delocalisation at any slope]
	\label{theorem:delocalisation_asymmetric}
	Let $(\mathbb G,\mathcal L)$ be a shift-invariant planar graph
	of maximum degree $m$, and
	let $V$ denote a Lipschitz potential.
	Then for any $0<\beta\leq (\log2)/2m$,
	the associated height function model delocalises at any slope,
	in the sense that the set $\mathcal G_{\mathcal L(\nabla)}$ is empty.
\end{theorem}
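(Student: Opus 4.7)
\medskip

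The plan is to deduce Theorem~\ref{theorem:delocalisation_asymmetric} from Theorem~\ref{theorem:delocalisation_symmetric} by a symmetrisation argument that simultaneously kills the asymmetry of the potential and cancels the (a priori nonzero) slope, at the cost of exactly one doubling of the effective inverse temperature. This is consistent with the bookkeeping of the two statements, where the admissible range shrinks precisely from $\beta\leq(\log 2)/m$ to $\beta\leq(\log 2)/2m$.

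Concretely, suppose for contradiction that some $\mu\in\mathcal G_{\mathcal L(\nabla)}$ exists for a Lipschitz potential $V$ at $\beta\leq(\log 2)/2m$. An ergodic decomposition in the class of gradient-shift-invariant Gibbs measures (standard from Sheffield's framework and recalled in Section~\ref{section:background}) reduces the problem to $\mu$ extremal, with a well-defined slope $u\in(\mathbb R^2)^*$. Sample two independent copies $\phi,\phi'\sim\mu$, forming the Gibbs measure $\mu\otimes\mu$ on pairs, whose Hamiltonian $H(\phi)+H(\phi')$ is separable and invariant under the swap $(\phi,\phi')\mapsto(\phi',\phi)$. Consider the antisymmetric coordinate $\psi:=\phi-\phi'$. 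Its slope is $u-u=0$, so its law is genuinely shift-invariant rather than merely gradient-shift-invariant; and swap symmetry of the joint renders the distribution of $\psi$ invariant under $\psi\mapsto -\psi$. The goal is to promote these two facts to the existence of a shift-invariant Gibbs measure for some symmetric Lipschitz potential at inverse temperature $2\beta\leq(\log 2)/m$, at which point Theorem~\ref{theorem:delocalisation_symmetric} yields the desired contradiction.

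The hardest step is this last extraction. The marginal law of the scalar field $\psi$ is not automatically a Gibbs measure for a single-component height function on $\mathbb G$, because its conditional distribution at a vertex depends also on the conjugate combination $\chi:=\phi+\phi'$, and there is a joint parity constraint $\psi\equiv\chi\pmod 2$. The natural way around this is to work with the vector-valued pair $(\phi,\phi')$ directly, or equivalently with $(\chi,\psi)$: the joint is Gibbs for a swap-symmetric doubled potential, and conditioning on $\chi$ produces a Gibbs specification for $\psi$ whose single-edge potentials are of the form $V_{xy}\big(\tfrac{c+d}{2}\big)+V_{xy}\big(\tfrac{c-d}{2}\big)$ with $c$ a frozen $\chi$-gradient; these are symmetric in $d$ and Lipschitz with constant at most $2$, which is exactly what forces the doubling $\beta\rightsquigarrow 2\beta$.

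A further verification is required to ensure that the construction stays within the hypotheses of Theorem~\ref{theorem:delocalisation_symmetric}: the underlying planar graph and its maximum degree $m$ must be preserved by the doubling procedure (which they are, as no edges are added), and the resulting shift-invariant measure must genuinely be a Gibbs measure for a single symmetric Lipschitz potential in the sense of Definition~\ref{definition:Hamiltonian}. The cleanest implementation likely either extends Theorem~\ref{theorem:delocalisation_symmetric} modestly to accommodate vector-valued height functions with a symmetric doubled potential, or integrates out $\chi$ to obtain a bona fide single-component symmetric Lipschitz model; in either route, the main obstacle is a bookkeeping one rather than a new probabilistic idea, as the core delocalisation mechanism has already been carried out in the symmetric case.
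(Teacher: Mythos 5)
Your starting point---two independent copies $\phi,\phi'$ and the difference field $\psi=\phi-\phi'$, which kills the slope and restores a sign symmetry---is indeed the same device the paper uses in Section~\ref{section:slope}. But the step you defer as ``bookkeeping'' is precisely where the real work lies, and as proposed it does not go through. The law of $\psi$ is \emph{not} a Gibbs measure for any nearest-neighbour single-field potential: integrating out $\chi=\phi+\phi'$ destroys the finite-range Gibbs structure. Conditioning on $\chi$ does not help either: the conditional specification for $\psi$ has edge weights $d\mapsto V_{xy}\bigl(\tfrac{c_{xy}+d}{2}\bigr)+V_{xy}\bigl(\tfrac{c_{xy}-d}{2}\bigr)$ with $c_{xy}$ a \emph{random}, edge-dependent frozen gradient, so it is a random-environment model, not a shift-invariant potential in the sense of Definition~\ref{definition:Hamiltonian}; moreover the conditional measure given $\chi$ is not shift-invariant, and there is the parity constraint you mention. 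Hence Theorem~\ref{theorem:delocalisation_symmetric} cannot be invoked as a black box, and ``extending it modestly to vector-valued height functions'' is exactly the content that needs a new argument. A secondary gap: you work directly with $\mu\in\mathcal G_{\mathcal L(\nabla)}$, but the ergodicity/percolation input (absence of phase coexistence for $\{\psi\geq0\}$ versus $\{\psi<0\}$) needs genuine shift-invariance of the law of $\psi$, which is not implied by gradient shift-invariance of $\mu$ (the field $\psi_x=\phi_x-\phi'_x$ is not measurable with respect to the pair's gradient $\sigma$-algebra). The paper first reduces to $\mu\in\mathcal G_{\mathcal L}$, using Sheffield's delocalisation for slopes outside $\mathcal L^*$ together with the deterministic shift $\phi\mapsto\phi-a$ (which keeps the potential Lipschitz), before running the two-copy argument.

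What the paper actually does with the pair is not a reduction to the symmetric theorem but a direct adaptation of the contour argument: it introduces the auxiliary field $\zeta$ (equal to $\phi_x+1$, $\phi_x$, or $\infty$ according to the sign of $\psi_x$), proves a conditional stochastic-domination lemma stating that, given $\zeta|_\Lambda=a$, one has $\psi|_\Lambda\succeq 2X|_\Lambda$---this is where $\beta\leq(\log 2)/2m$ enters, via two geometric steps each costing $e^{-\beta m}\geq 2^{-1/2}$ so that $\psi\geq2$ (hence $\phi<\zeta<\phi'$) with probability at least $1/2$, rather than via a Lipschitz-constant-$2$ effective potential---and then conditions on the exterior contour of $\{\psi\geq0\}\cap\Lambda_n$ and on $\zeta$ there, using monotonicity, the Markov property and the fact that $\gamma$ is a gradient specification to conclude $\mu^2(\psi_r\,|\,K_n)\geq1$, contradicting $\mu^2(\psi_r)=0$. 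If you want to salvage your route, you would have to supply an analogue of these steps for the pair measure; as written, the proposal identifies the right symmetrisation idea but leaves the essential mechanism unproved.
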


In particular, these first two main results apply to the solid-on-solid model at the given temperatures.
We connect these results to delocalisation in finite volume through the following theorem.
For the avoidance of doubt, $\gamma_\Lambda(\cdot,0)$ denotes the 
local measure of the model of interest in the finite subset $\Lambda\subset\mathbb V$
with boundary condition $0\in\Omega$.

\newcommand\TWONOTIONSTHEOREMSTATEMENT{
	Let $(\mathbb G,\mathcal L)$ be a shift-invariant planar graph with some distinguished root vertex $r$,
	and let $V$ denote a symmetric super-Gaussian potential.
	Then $\mathcal G_\mathcal L$ is empty if and only if 
	\begin{equation*}
		\lim_{n\to\infty}\gamma_{\Delta_n}(\phi_r^2,0)=\infty
	\end{equation*}
	for any sequence $(\Delta_n)_{n}$ of finite subsets of $\mathbb V$
	such that $\cup_{n\geq 0}\cap_{k\geq n}\Delta_k=\mathbb V$.
	Moreover, the limit on the left in the display is well-defined and independent of the sequence chosen whenever $V$ satisfies 
	these conditions, whether or not $\mathcal G_\mathcal L$ is empty.
}

\begin{theorem}[delocalisation in finite volume]
	\label{thm:fvdelocalisation}
	\TWONOTIONSTHEOREMSTATEMENT
\end{theorem}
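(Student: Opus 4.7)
The plan is to deploy the $|\cdot|$-FKG property from Theorem~\ref{thm:FKG_main_results} twice: first to establish the monotonicity of $\Lambda\mapsto\gamma_\Lambda(\phi_r^2,0)$, and then to compare this quantity with the second moment of any shift-invariant Gibbs measure.

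For the \emph{monotonicity}, fix $\Lambda\subset\Lambda'\ni r$. Consistency of the specification yields $\gamma_{\Lambda'}(\phi_r^2,0)=\int \gamma_\Lambda(\phi_r^2,\phi)\,\gamma_{\Lambda'}(d\phi,0)$, and the $|\cdot|$-FKG property for $\gamma_{\Lambda'}(\cdot,0)$ makes the integrand $\phi\mapsto\gamma_\Lambda(\phi_r^2,\phi)$ monotone non-decreasing in $|\phi|$ on $\Lambda'\smallsetminus\Lambda$, whence $\gamma_\Lambda(\phi_r^2,\phi)\geq\gamma_\Lambda(\phi_r^2,0)$ almost surely under $\gamma_{\Lambda'}(\cdot,0)$. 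Therefore $\Lambda\mapsto\gamma_\Lambda(\phi_r^2,0)$ is non-decreasing, and every exhausting sequence converges to $S:=\sup_{\Lambda\ni r}\gamma_\Lambda(\phi_r^2,0)\in[0,\infty]$, which settles the ``moreover'' clause.

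The implication $S<\infty \Rightarrow \mathcal{G}_\mathcal{L}\neq\emptyset$ is the easier half. Shift-invariance of $(\mathbb{G},\mathcal{L})$ combined with the monotonicity transports the bound to every vertex: $\gamma_\Lambda(\phi_v^2,0)\leq S$ for every $v\in\mathbb{V}$, so $\{\gamma_\Lambda(\cdot,0)\}$ is tight on $\Omega$ in the product topology. I would extract a weakly convergent subsequence with Gibbs limit $\mu$, then Cesaro average $\mu$ along a F\o lner sequence in $\mathcal{L}$: the averages are Gibbs by convexity, remain uniformly tight by the same second-moment bound, and any sub-limit belongs to $\mathcal{G}_\mathcal{L}$.

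For the reverse implication $\mathcal{G}_\mathcal{L}\neq\emptyset\Rightarrow S<\infty$, fix $\nu\in\mathcal{G}_\mathcal{L}$. The DLR identity $\nu=\nu\gamma_\Lambda$ combined with the same FKG monotonicity (now applied to a $\nu$-distributed boundary condition compared with the zero boundary condition) yields $\nu(f(|\phi_r|))\geq\gamma_\Lambda(f(|\phi_r|),0)$ for every bounded non-decreasing $f$, and two monotone-convergence passes promote this to $\nu(\phi_r^2)\geq S$. The main obstacle is then to show $\nu(\phi_r^2)<\infty$. The natural route is a dichotomy on the tail $p^\ast(K):=\sup_\Lambda\gamma_\Lambda(|\phi_r|>K,0)$: either $\inf_K p^\ast(K)>0$, in which case $\nu(|\phi_r|>K)\geq p^\ast(K)$ contradicts $\nu(|\phi_r|<\infty)=1$; or $p^\ast(K)\to 0$, and one would hope to leverage the $|\cdot|$-FKG structure together with symmetry and shift-invariance to upgrade this tightness to geometric decay, so that $\sum_K(2K+1)\,p^\ast(K)=S$ is finite. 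This dichotomy, reminiscent of the sharpness machinery of~\cite{Duminil-Copin+Sidoravicius+Tassion-2017}, is the delicate step of the whole argument.
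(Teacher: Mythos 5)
Your high-level plan — monotonicity of $\Lambda\mapsto\gamma_\Lambda(\phi_r^2,0)$, tightness plus shift-averaging for one implication, and a finiteness comparison for the other — matches the structure of the paper's proof, but the reasoning in steps one and three has genuine gaps.

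For the monotonicity step, the claim that the $|\cdot|$-FKG lattice condition ``makes the integrand $\phi\mapsto\gamma_\Lambda(\phi_r^2,\phi)$ monotone non-decreasing in $|\phi|$'' is not warranted. That integrand depends on the \emph{signs} of the boundary condition (symmetry only provides invariance under the global flip $\phi\mapsto-\phi$), so it is not even a function of $|\phi|$; and in any case the lattice condition is a property of the measure, not something that automatically transfers monotonicity to arbitrary conditional expectations in $\phi$. The paper's argument instead conditions on $|\phi|$ (not on $\phi$) on $\Lambda'\smallsetminus\Lambda$ under $\mu_{\Lambda'}:=\gamma_{\Lambda'}(\cdot,0)$, uses the FKG lattice condition to make the conditional law of $|\phi|$ on $\Lambda$ stochastically monotone in the conditioned values, and then invokes two decisive facts about level zero: $0$ is minimal among nonnegative values, and $|\phi_x|=0$ forces $\phi_x=0$, so the law of $|\phi|$ inherits a spatial Markov property precisely through circuits at level zero. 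That is what lets one identify the minimal conditional law with $\mu_\Lambda:=\gamma_\Lambda(\cdot,0)$ and conclude $|\phi|\big|_{\mu_\Lambda}\preceq|\phi|\big|_{\mu_{\Lambda'}}$. Your proposal never uses the equivalence $|\phi_x|=0\Leftrightarrow\phi_x=0$, and without it the two kernels cannot be compared in this fashion. Your middle step ($S<\infty\Rightarrow\mathcal G_\mathcal L\neq\emptyset$) is essentially the paper's argument.

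The step $\mathcal G_\mathcal L\neq\emptyset\Rightarrow S<\infty$ is where your proposal breaks down for real, and you say so yourself. A shift-invariant Gibbs measure $\nu$ has boundary data of mixed signs, so Theorem~\ref{thm:FKG_main_results} does not apply to it directly, and the tail-dichotomy on $p^*(K)$ you gesture at is neither what the paper does nor obviously workable. The paper's actual idea is simple and draws on the percolation analysis of Section~\ref{section:height_percolation}: the unique ergodic $\mu\in\mathcal G_\mathcal L$ with $\mu(\phi_r)=0$ has $\{\phi<0\}$ not percolating, so $\mu$-almost surely there are circuits through $\{\phi\geq 0\}$ surrounding any finite $\Lambda$. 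Conditioning on such a circuit produces a \emph{nonnegative} boundary condition, to which Theorem~\ref{thm:FKG_main_results} does apply, and the $|\cdot|$-FKG comparison with the $0$ boundary gives $|\phi|\big|_{\mu_\Lambda}\preceq|\phi|\big|_\mu$, hence $\gamma_\Lambda(\phi_r^2,0)\leq\mu(\phi_r^2)<\infty$, the last bound from log-concavity of the single-site marginal. This non-percolation input, which bridges mixed-sign Gibbs boundary data to the nonnegative-boundary setting of Theorem~\ref{thm:FKG_main_results}, is the key missing ingredient in your proposal.
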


The theorem applies to the solid-on-solid model,
where it asserts that the variance tends to infinity 
for $\beta\leq (\log2)/m$.
The super-Gaussian property is also satisfied by the discrete Gaussian model
(the discrete Gaussian free field conditioned to take integer values).
The previous theorem relies on an important observation:
that the absolute value of the height function $\phi$
satisfies the FKG lattice condition in the measure $\gamma_\Lambda(\cdot,\psi)$
whenever $\psi$ is a height function taking nonnegative values.
This observation does not require that the graph is 
planar or shift-invariant.
Therefore it is also not required that the associated potential family is shift-invariant
(which would simply not make sense for more general graphs).

\newcommand\FKGLATTICESTATEMENT{
	Let $\mathbb G=(\mathbb V,\mathbb E)$ denote a connected locally finite graph,
	and let $\Lambda$ denote a finite, strict subset of $\mathbb V$.
	Consider a symmetric super-Gaussian potential $V$,
	which on this occasion is not required to be shift-invariant.
	Let $\psi\in\Omega$ denote a height function taking nonnegative values only.
	Then the absolute value $|\phi|$ of the height function 
$\phi$ satisfies the FKG lattice condition in the measure $\mu:=\gamma_\Lambda(\cdot,\psi)$,
in the sense that 
\[
	\mu(|\phi|=\xi\vee\zeta)\mu(|\phi|=\xi\wedge\zeta)\geq\mu(|\phi|=\xi)\mu(|\phi|=\zeta)
\] 
for any nonnegative height functions $\xi,\zeta\in\Omega$ which equal $\psi$ on $\mathbb V\smallsetminus\Lambda$.
}

\begin{theorem}[FKG lattice condition for the absolute height]
	\label{thm:FKG_main_results}
\FKGLATTICESTATEMENT
\end{theorem}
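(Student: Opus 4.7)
My plan exploits the sign-magnitude decomposition $\phi = \sigma \cdot h$ with $h := |\phi|$ and $\sigma \in \{\pm 1\}^{\mathbb V}$, where $\sigma_v = +1$ for $v \in \mathbb V \smallsetminus \Lambda$ (forced by $\psi \geq 0$) and by convention also when $h_v = 0$. Using that each $V_{xy}$ is symmetric, a short calculation rewrites the Hamiltonian as
\[
  H_\Lambda(\sigma h) \;=\; G(h) \;-\; \sum_{xy \in \mathbb E(\Lambda)} A_{xy}(h_x, h_y)\,\sigma_x \sigma_y,
\]
with $G(h) := \tfrac12 \sum_{xy} \bigl( V_{xy}(h_x+h_y) + V_{xy}(h_x-h_y)\bigr)$ and $A_{xy}(a,b) := \tfrac12 \bigl( V_{xy}(a+b) - V_{xy}(|a-b|)\bigr)$. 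Convexity and symmetry of $V_{xy}$ give $A_{xy}(a,b) \geq 0$ whenever $a, b \geq 0$, so conditional on $|\phi| = h$ the field $\sigma$ is distributed as a ferromagnetic Ising model on $\Lambda$ with couplings $\beta A_{xy}(h_x,h_y)$ and $+1$ boundary condition. Marginalising yields
\[
  \mu(|\phi| = h) \;\propto\; 2^{-M(h)}\, e^{-\beta G(h)}\, Z^\beta_{\mathrm{Is}}(h),
\]
where $Z^\beta_{\mathrm{Is}}(h)$ is the corresponding Ising partition function and $M(h) := \#\{v \in \Lambda : h_v = 0\}$ accounts for the sign-redundancy at vertices where $h$ vanishes.

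The FKG lattice condition on $|\phi|$ then reduces to showing that each of these three factors is log-supermodular on the distributive lattice of non-negative height functions matching $\psi$ outside $\Lambda$. The factor $2^{-M(h)}$ is even log-modular, because $M(h \vee h') + M(h \wedge h') = M(h) + M(h')$ by an inclusion--exclusion count of zero sites. For $e^{-\beta G(h)}$, I plan to compute the mixed discrete second derivative of the edge weight $\tfrac12(V_{vw}(h_v + h_w) + V_{vw}(h_v - h_w))$ and identify it as $\tfrac12 \bigl(V_{vw}^{(2)}(h_v + h_w + 1) - V_{vw}^{(2)}(h_v - h_w)\bigr)$; the $|\cdot|$-FKG hypothesis (non-increase of $V^{(2)}$ on $\mathbb Z_{\geq 0}$), combined with the symmetry $V_{vw}^{(2)}(-k) = V_{vw}^{(2)}(k)$ and the strict inequality $h_v + h_w + 1 > |h_v - h_w|$, makes this non-positive, so $G$ is submodular.

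The main obstacle is supermodularity of $\log Z^\beta_{\mathrm{Is}}(h)$ in $h$. I would first verify by an analogous discrete computation that $A_{vw}(a,b)$ is coordinatewise non-decreasing and supermodular in $(a,b)$; both properties rely only on convexity of $V_{vw}$, and the $|\cdot|$-FKG hypothesis is not needed here. To pass from these pointwise properties to the partition function, it suffices to prove the discrete monotonicity $\rho_v(h + e_w) \geq \rho_v(h)$ for all distinct $v,w \in \Lambda$, where the ratio
\[
  \rho_v(h) := \frac{Z^\beta_{\mathrm{Is}}(h + e_v)}{Z^\beta_{\mathrm{Is}}(h)} = \Bigl\langle \exp\Bigl(\beta \sum_{y \sim v} \Delta_v A_{vy}(h)\,\sigma_v \sigma_y\Bigr)\Bigr\rangle_h
\]
is an Ising expectation, with the increments $\Delta_v A_{vy}(h) := A_{vy}(h_v+1,h_y) - A_{vy}(h_v,h_y) \geq 0$. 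Shifting $h$ by $e_w$ produces two non-negative perturbations: the Ising couplings $A_{wy'}$ grow (monotonicity of $A$) and the single coefficient $\Delta_v A_{vw}$ grows (supermodularity of $A$). The first is controlled by Griffiths' second inequality, once one expands the observable as a Taylor series (reducing powers via $\sigma_v^2 = 1$) and observes that every resulting spin-monomial carries a non-negative coefficient; the second is controlled by Griffiths' first inequality applied to that same non-negative polynomial multiplied by $\sigma_v \sigma_w$. The hard part is precisely this two-effect analysis and the bookkeeping of non-negative monomial expansions required to invoke Griffiths~I and~II.
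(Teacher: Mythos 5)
Your proposal takes essentially the same route as the paper: you arrive at the same sign--magnitude decomposition (your $A_{xy}$ is the paper's $K^\xi_{xy}$ and your $G$ its $f$ factor), you correctly isolate that the $|\cdot|$-FKG hypothesis is needed only for the $e^{-\beta G}$ factor (via the mixed second difference $\tfrac12\bigl(V^{(2)}(h_v+h_w+1)-V^{(2)}(h_v-h_w)\bigr)\le 0$) while pure convexity suffices for monotonicity and supermodularity of the Ising couplings, and you invoke GKS on exponential observables to control $\log Z_{\mathrm{Is}}$. The only cosmetic difference is bookkeeping: you verify log-supermodularity factor-by-factor via the Holley two-site criterion, whereas the paper first reduces the whole inequality to pairs $(a,a+1)\times(b,b+1)$ via finite energy and then compares ratios of partition functions against a single reference measure $\langle\cdot\rangle_{\xi^{ab}}$ — but this ``two-effect Griffiths analysis'' you flag as the hard part is precisely the content of the paper's Lemma~\ref{lemma:exp_gks}, proved exactly by the Taylor-expansion-plus-GKS argument you describe.
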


Subsection~\ref{subsec:extensions_of_FKG} is dedicated to the extension
of this theorem to real-valued height functions,
as well as models with other \emph{a priori} spin distributions $\lambda$.

\section{Background on height functions}

\label{section:background}

The purpose of this section is to recall some existing results on height functions,
and to introduce notations as well as some simple convenient mathematical objects.
The results cited here are especially useful for the proofs 
of Theorem~\ref{theorem:delocalisation_symmetric} and Theorem~\ref{theorem:delocalisation_asymmetric} in Section~\ref{section:height_percolation} and Section~\ref{section:slope} 
respectively.

\subsection{General notations}

Throughout each section amongst Sections~\ref{section:background}--\ref{section:slope},
the notations $\mathbb G$, $\mathcal L$, $V$, and $\beta$ are fixed,
and---in order to arrive at a contradiction---it is supposed that $\mathcal G_\mathcal L$ is nonempty.
The letter $\mu$ then denotes some fixed shift-invariant Gibbs measure in the set $\mathcal G_\mathcal L$.
It is well-known that \(\mu\) may be decomposed into ergodic components which are also shift-invariant Gibbs measures,
and by doing so and choosing one such component to replace $\mu$,
we may assume without loss of generality that \(\mu\) is itself ergodic.
This means that $\mu(A)\in\{0,1\}$ for any event 
$A$ which satisfies $\theta A=A$
for all $\theta\in\mathcal L$.
Define $\mathcal F_\Lambda:=\sigma(\phi_x:x\in\Lambda)$ for $\Lambda\subset\mathbb V$.

\subsection{Results in the work \emph{Random surfaces} of Sheffield.}
\label{subsection:overview_random_surfaces}

Sheffield demonstrated---amongst many other things---that ergodic measures are also extremal,
in his seminal work on height functions titled \emph{Random surfaces}~\cite{Sheffield-2006}.
This result, as well as two of its corollaries, form the starting point of our analysis.
Remark that these results are not constrained to the two-dimensional setting.
The three formal statements are as follows:
\begin{enumerate}
	\item If $\mu\in\mathcal G_\mathcal L$ is ergodic, then it is also \emph{extremal},
	meaning that $\mu(A)\in\{0,1\}$ for any event $A$ that is measurable with respect 
	to $\mathcal F_\Lambda$ for each cofinite set $\Lambda\subset\mathbb V$,
	\item If $\mu$ is ergodic, then the distribution of $\phi_x$ is log-concave for any $x\in\mathbb V$;
	in particular, the random variable $\phi_x$ has finite mean and moments of all orders,
	\item If $\mu$ is ergodic, then it satisfies the \emph{FKG inequality}, meaning 
	that $\mu(fg)\geq\mu(f)\mu(g)$ for any pair of random variables 
	$f,g:\Omega\to[0,1]$ which are \emph{increasing} in the sense that 
	$f(\psi)\geq f(\phi)$ and $g(\psi)\geq g(\phi)$ for any pair of height functions $\phi,\psi\in\Omega$
	with $\psi\geq \phi$.
\end{enumerate}
It is straightforward to derive the last two statements from the result on extremality.
Write $\Lambda_n$ for the set of vertices at a graph distance at most $n$ from some fixed root vertex $r\in\mathbb V$,
and define the \emph{topology of local convergence} to be the coarsest topology on $\mathcal P(\Omega,\mathcal F)$
that makes the map $\mu\mapsto\mu(A)$ continuous for any finite $\Lambda\subset\mathbb V$ and for any $A\in\mathcal F_{\Lambda}$.
Log-concavity and the FKG inequality hold true for any measure of the form $\gamma_\Lambda(\cdot,\phi)$ because 
each potential function is convex.
The second and third result now follow from the first one simply because $\mu$ equals the limit of 
$\gamma_{\Lambda_n}(\cdot,\phi)$ as $n\to\infty$
for $\mu$-almost every $\phi$ in the topology of local convergence,
due to extremality.

For the result on delocalisation at a slope (Theorem~\ref{theorem:delocalisation_asymmetric}),
we shall appeal to another result of \emph{Random surfaces}.
Consider for this purpose some measure $\nu\in\mathcal G_{\mathcal L(\nabla)}$.
It may be assumed without loss of generality that the gradient of $\nu$ is ergodic,
in the sense that $\nu(A)\in\{0,1\}$ for any gradient event $A\in\mathcal F^\nabla$
which satisfies $\theta A=A$ for all $\theta\in\mathcal L$.
Define the \emph{slope} of $\nu$ to be the unique element $u\in(\mathbb R^2)^*$
such that 
$\nu(\phi_y-\phi_x)=u(y-x)$ for all $x,y\in\mathbb V$ with $y-x\in\mathcal L$;
it is easy to demonstrate that such an element exists.
The \emph{dual lattice} $\mathcal L^*$ of $\mathcal L$ consists of the slopes $u\in(\mathbb R^2)^*$ which take integer values on $\mathcal L$.
Sheffield derived delocalisation for all Gibbs measures with an ergodic gradient whose slope 
does not lie in $\mathcal L^*$.
In other words, we may suppose that the slope of the measure $\nu$ introduced above lies in $\mathcal L^*$.

This is convenient for the following reason.
The measure $\nu$ and the potential $V$ can be transformed in such a way that the slope of 
$\nu$ is the zero slope $0\in(\mathbb R^2)^*$, and therefore $\nu\in\mathcal G_\mathcal L$.
The transformation on the potential preserves the Lipschitz property.
For the proof of Theorem~\ref{theorem:delocalisation_asymmetric}
it therefore suffices to demonstrate that the set $\mathcal G_\mathcal L$ is empty
(under the same hypotheses).
The transformation is straightforward to define.
If $u$ is the slope of $\nu$,
then write $a:\mathbb V\to\mathbb Z$
for some function which satisfies $a(y)-a(x)=u(y)-u(x)$ for any $x,y\in\mathbb V$ with $y-x\in\mathcal L$.
To sample from the transformed measure $\nu'$,
sample first a height function $\phi$ from $\nu$,
then define $\phi':=\phi-a$.
The measure $\nu'$ is a Gibbs measure with respect to the specification $\gamma'$
corresponding to the same inverse temperature $\beta$, now 
with the potential $V'$ defined by 
\[
	V'_{xy}(z):=V_{xy}(z+(a(y)-a(x))).
\] 
Indeed, the potential $V'$ is shift-invariant,
and Lipschitz whenever $V$ is Lipschitz.

\subsection{Phase coexistence on planar graphs}

Consider an ergodic measure $\nu\in\mathcal P_\mathcal L(\Omega,\mathcal F)$ which satisfies the FKG inequality
and such that the random height function $\phi$ takes values in $\{0,1\}$ almost surely.
Informally, we think of $\nu$ as being a site percolation measure.
The measure $\nu$ is said to have \emph{finite energy} if the conditional probability of a vertex having height 
one,
given the heights of all other vertices, is always strictly between zero and one.
It is known that under these conditions it almost never occurs 
that both $\{\phi=0\}$ and $\{\phi=1\}$ have an infinite cluster.
In other words, we may rule out phase coexistence for such site percolation measures.
This result was first proven by Sheffield in~\cite[Chapter~9]{Sheffield-2006},
and we also refer to~\cite[Theorem~1.5]{Duminil-Copin+Raoufi+Tassion-2017} for an alternative proof.

\subsection{Stochastic ordering}

Consider two measures $\mu,\nu\in\mathcal P(\Omega,\mathcal F)$ with height functions $\phi$ and $\psi$ respectively.
Say that $\mu$ is \emph{stochastically dominated} by $\nu$,
and write $\mu\preceq\nu$, if there is a coupling between the two measures 
such that $\phi\leq \psi$.
We shall sometimes say that $\phi$ is stochastically dominated by $\psi$,
and write $\phi\preceq\psi$,
leaving the reference to the corresponding measures implicit.
For fixed $\Lambda\subset\mathbb V$,
say that $\phi$ is stochastically dominated by $\psi$ on $\Lambda$,
and write $\phi|_\Lambda\preceq\psi|_\Lambda$,
whenever $\phi|_\Lambda\leq\psi|_\Lambda$ for some coupling of $\mu$ and $\nu$.
Remark that the specification $\gamma$ is \emph{monotone} in the sense that it preserves the stochastic ordering:
if $\mu\preceq\nu$, then $\mu\gamma_\Lambda\preceq\nu\gamma_\Lambda$
for any finite $\Lambda\subset\mathbb V$.
We shall write $\alpha\in\mathcal P(\Omega,\mathcal F)$
for the measure with height function $X$ such that $(X_x)_{x\in\mathbb V}$ 
is a collection of i.i.d.\ random variables each having a Bernoulli distribution with parameter 
$1/2$ and support $\{0,1\}$
throughout this article.
This measure shall play an important role in our proofs of
Theorem~\ref{theorem:delocalisation_symmetric} and Theorem~\ref{theorem:delocalisation_asymmetric}.

\subsection{Exterior contours}
\label{subsec:ext_contours_def}

Consider a finite set $\Lambda\subset\mathbb V$.
By the \emph{exterior} of $\Lambda$ we mean the unique infinite cluster $E(\Lambda)\subset\mathbb V$ of $\mathbb V\smallsetminus\Lambda$.
The \emph{exterior contour} of $\Lambda$ is the set $\Gamma(\Lambda)$ consisting of the vertices
in $\Lambda$ which are adjacent to $E(\Lambda)$.
The \emph{interior} of $\Lambda$, denoted by $\Delta(\Lambda)$,
consists of the remaining vertices: those not in $E(\Lambda)$ or $\Gamma(\Lambda)$.
Note that $\Delta(\Lambda)$ can be written as the union of all finite clusters 
of the set $\mathbb V\smallsetminus\Gamma(\Lambda)$.
We say that the exterior contour $\Gamma(\Lambda)$ \emph{surrounds} some vertex $x\in\mathbb V$
whenever $x\in\Delta(\Lambda)$.
The exterior contour $\Gamma(\Lambda)$ can informally be thought of as a finite collection of closed $*$-paths 
through the planar graph $\mathbb G$.

Recall that $\Lambda_n$ denotes the set of vertices at a graph distance at most $n$ from the root vertex $r\in\mathbb V$.
Suppose that $n\in\mathbb N$ and that $\Lambda\subset\Lambda_n$ is a random set
such that the event $x\in\Lambda$ is measurable with respect to $\phi_x$ for each $x$.
Then the natural exploration process for $\Gamma(\Lambda)$ reveals only the heights
of the vertices in the set $\Lambda_n\smallsetminus\Delta(\Lambda)$.





\section{Mean height and level set percolation}
\label{section:height_percolation}

In this section, we direct our attention to the proof of Theorem~\ref{theorem:delocalisation_symmetric}.
Recall that $\mathbb G$, $\mathcal L$, $V$ and $\beta$ are fixed,
and that $\mu$ is an ergodic extremal Gibbs measure.
We aim to derive a contradiction, proving that such a measure $\mu$ cannot exist. 
Introduce the events
\begin{equation}
\label{eq:def_perco_heights}
	A_k^{\square} := \{\text{the set $\mathbb V\smallsetminus\{x:\varphi_x\mathrel\square k\}$ does not percolate}\}
\end{equation}
for \(\square\in\{\leq,\geq\}\) and \(k\in\mathbb Z\).
Thus, the event \(A_k^{\square}\) occurs if and only if each vertex of $\mathbb G$ is surrounded by 
the exterior contour of the set \(\{\varphi\mathrel\square k\}\cap\Lambda_n\) for $n$ large enough.
Such events are shift-invariant, and therefore occur with probability zero or one for \(\mu\).
The mean height $\mu(\phi_r)$ at the vertex $r$ is related to the previous family of events through the following lemma.
The lemma relies crucially on the fact that $V$ is a symmetric potential with convex potential functions.

\begin{lemma}
	\label{lemma:exploration}
	Let $\mathbb G$ denote a shift-invariant planar graph with some distinguished root vertex $r$,
	and let $V$ be a symmetric potential.
	Consider any inverse temperature $\beta>0$.
	Suppose that $\mu$ is a Gibbs measure and that $\phi_r$ is integrable.
	If the event \(A_k^\square\) occurs almost surely
	then $\mu(\phi_r)\mathrel\square k$,
	for any $\square\in\{\leq,\geq\}$ and for any \(k\in\mathbb Z\).
\end{lemma}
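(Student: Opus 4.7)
The approach combines a reflection argument with the exterior-contour exploration. I will prove the case $\square={\leq}$ in detail; the case $\square={\geq}$ is completely symmetric, as all inequalities in the argument reverse consistently. The central auxiliary claim is the following \emph{reflection bound}: for any finite $\Lambda\subset\mathbb V$, any height function $\psi$ with $\psi_y\leq k$ for every $y\notin\Lambda$, and any $x\in\Lambda$,
\[
\gamma_\Lambda(\phi_x,\psi)\leq k.
\]
To establish this, I would use the involution $T\colon\phi\mapsto 2k-\phi$ on $\Omega$. Since $(T\phi)_y-(T\phi)_x=-(\phi_y-\phi_x)$, the symmetry of $V$ gives $H_\Lambda(T\phi)=H_\Lambda(\phi)$; hence $T$ sends $\gamma_\Lambda(\cdot,\psi)$ bijectively onto $\gamma_\Lambda(\cdot,2k-\psi)$, yielding the identity
\[
\gamma_\Lambda(\phi_x,\psi)+\gamma_\Lambda(\phi_x,2k-\psi)=2k.
\]
Setting $\psi\equiv k$ gives $\gamma_\Lambda(\phi_x,k)=k$. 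Because $V$ is convex, $\gamma_\Lambda(\cdot,\eta)$ is monotone in $\eta$, so $2k-\psi\geq k$ implies $\gamma_\Lambda(\phi_x,2k-\psi)\geq k$, whence the reflection bound.

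For the lemma itself, I would then invoke the exploration process of Section~\ref{subsec:ext_contours_def} applied to the random set $\{\phi\leq k\}\cap\Lambda_n$. Under the hypothesis $\mu(A_k^{\leq})=1$, the interior $\Delta_n:=\Delta(\{\phi\leq k\}\cap\Lambda_n)$ contains $r$ for all $n$ sufficiently large, $\mu$-almost surely; its exterior contour $\Gamma_n$ has all heights at most $k$ by construction, and $\Delta_n$ is adjacent only to vertices of $\Gamma_n$ in $\mathbb G$. By the Gibbs property, the conditional distribution of $\phi|_{\Delta_n}$ given $\mathcal F_{\mathbb V\smallsetminus\Delta_n}$ is $\gamma_{\Delta_n}(\cdot,\phi)$, so the reflection bound gives
\[
\gamma_{\Delta_n}(\phi_r,\phi)\leq k \quad \text{on the event }\{r\in\Delta_n\}.
\]
Taking $\mu$-expectations, using $\mu(\{r\in\Delta_n\})\to 1$ and the assumed integrability of $\phi_r$ (which handles the vanishing contribution from $\{r\notin\Delta_n\}$), yields $\mu(\phi_r)\leq k$.

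The delicate step is the reflection bound, which requires two features of $V$ simultaneously: symmetry, so that the involution $T$ preserves the Hamiltonian, and convexity, which underlies the monotonicity of $\gamma_\Lambda$ in its boundary condition. The exploration part is a routine application of the Gibbs property, resting on the planarity-based observation recorded at the end of Section~\ref{subsec:ext_contours_def} that $\Delta_n$ interacts with the rest of $\mathbb V$ only through $\Gamma_n$.
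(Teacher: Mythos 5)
Your proof is correct and takes the approach the paper implicitly has in mind (the paper only defers to \cite[Lemma~3.5]{Lammers-2020}): the reflection identity $\gamma_\Lambda(\phi_x,\psi)+\gamma_\Lambda(\phi_x,2k-\psi)=2k$ from symmetry of $V$, combined with monotonicity of the specification in the boundary condition, gives the boundary bound $\gamma_\Lambda(\phi_x,\psi)\mathrel\square k$; this is then transported to $\mu(\phi_r)$ via the exterior-contour exploration and the Gibbs property for the random interior, with the integrability hypothesis controlling the residual event $\{r\notin\Delta_n\}$. The argument is complete and correctly identifies the two roles of the assumptions (symmetry for the involution, convexity for monotonicity, planarity and the stopping-set property for the exploration).
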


Its proof is entirely straightforward; see~\cite[Lemma~3.5]{Lammers-2020} for details.
Since the law of the set $\{\phi\mathrel\square k\}$ is shift-invariant and depends in a monotone fashion on $\phi$,
we may almost surely rule out phase coexistence of this set and its complement.
Taking into account the previous lemma,
we conclude that 
there is a unique integer $n$ such that $\mu(\phi_r)=n$
and such that $\mu(A_k^\geq)=1(n\geq k)$ and $\mu(A_k^{\leq})=1(n\leq k)$.
We shall suppose that $n=0$ without loss of generality.
Let us now specialise further to symmetric Lipschitz potentials.
For the following lemma, recall the definition of the measure $\alpha$
with the random height function $X$.

\begin{lemma}
	\label{lemma:domination_simple}
Let $\mathbb G$ denote a shift-invariant planar graph of maximum degree $m$,
let $V$ denote a Lipschitz potential, and fix $0<\beta\leq\log2/m$.
Consider $\Lambda\subset\mathbb V$ finite and let $\nu\in\mathcal P(\Omega,\mathcal L)$
denote any probability measure such that $\nu=\nu\gamma_\Lambda$.
Write $\nu'$ for the measure $\nu$ conditioned on the event that 
$\phi_x\geq 0$ for all $x\in\Lambda$.
Then $\phi|_\Lambda\succeq X|_\Lambda$ in the measures $\nu'$ and $\alpha$
respectively.
\end{lemma}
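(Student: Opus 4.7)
My plan is to apply Holley's lattice criterion after first disintegrating with respect to the boundary. Since $\nu=\nu\gamma_\Lambda$, the conditional law of $\phi$ under $\nu$ given $\phi|_{\mathbb V\smallsetminus\Lambda}=\psi$ is $\gamma_\Lambda(\,\cdot\,,\psi)$. Writing $\tilde\nu_\psi$ for this law conditioned on $\{\phi|_\Lambda\geq 0\}$, the projection $\nu'|_\Lambda$ is a mixture of the projections $\tilde\nu_\psi|_\Lambda$ over boundary configurations~$\psi$, so it suffices to establish $\tilde\nu_\psi|_\Lambda\succeq\alpha|_\Lambda$ for each fixed $\psi$. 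This reduces the problem to the finite-dimensional product space $\mathbb Z_{\geq 0}^\Lambda$.

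By Holley's lattice criterion, to verify this stochastic domination it suffices to compare one-site conditional distributions. Because $\alpha|_\Lambda$ is an i.i.d.\ Bernoulli$(\tfrac12)$ field, its one-site conditional distribution is identically Bernoulli$(\tfrac12)$, so the only non-trivial case of the comparison is the single inequality
\[
	\tilde\nu_\psi\bigl(\phi_x\geq 1\bigm|\phi_y=\omega_y\text{ for }y\neq x\bigr)\geq\tfrac{1}{2}
\]
for every $x\in\Lambda$ and every configuration $\omega$ with $\omega_y\geq 0$ on $\Lambda\smallsetminus\{x\}$; the cases $k\leq 0$ and $k\geq 2$ of the criterion are automatic since $\tilde\nu_\psi$ is supported on nonnegative heights and $\alpha$ on $\{0,1\}$.

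For this one-site inequality, the DLR equation together with the conditioning shows that the conditional law of $\phi_x$ is proportional to
\[
	w(k):=\mathbf 1(k\geq 0)\prod_{y:\,xy\in\mathbb E}e^{-\beta V_{xy}(\omega_y-k)},\qquad k\in\mathbb Z.
\]
The Lipschitz property gives $V_{xy}(\omega_y-k-1)\leq V_{xy}(\omega_y-k)+1$, so $w(k+1)\geq e^{-\beta m}w(k)\geq w(k)/2$ by the hypothesis $\beta m\leq\log 2$, using that $x$ has at most $m$ neighbours. Iterating yields $w(k)\geq 2^{-k}w(0)$ for every $k\geq 0$, and summing the geometric series gives $\sum_{k\geq 1}w(k)\geq w(0)\sum_{k\geq 1}2^{-k}=w(0)$, hence $\sum_{k\geq 1}w(k)\big/\sum_{k\geq 0}w(k)\geq 1/2$, as required.

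The only slightly delicate point is to check that Holley's lattice criterion applies on the countably infinite per-site state space $\mathbb Z_{\geq 0}$; since $\Lambda$ is finite and the one-site conditionals are all well-defined and bounded, the classical proof goes through unchanged (alternatively, one can run a Gibbs-sampler coupling, where the very same single-site bound guarantees that the coupling preserves $\phi\geq X$ at each step). Note that the argument uses only the Lipschitz property together with $\beta m\leq\log 2$; neither the symmetry nor the strict convexity of the potential functions enters, consistent with the generality of the statement.
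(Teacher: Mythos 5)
Your argument is correct and rests on the same core observation as the paper's proof: the single-site conditional weight ratio $w(k+1)/w(k)\geq e^{-\beta m}\geq 1/2$ obtained from the Lipschitz property, followed by a geometric-series comparison giving $\Pr(\phi_x\geq 1\mid\phi_x\geq 0,\,\text{rest})\geq\tfrac12$. The only cosmetic difference is in how the single-site bound is lifted to all of $\Lambda$ — you invoke Holley's criterion / a Glauber-coupling argument, whereas the paper phrases the lift as ``averaging over $\psi$ and inducting on the number of vertices in $\Lambda$'' — and both handle the countable per-site state space without difficulty since $\Lambda$ is finite and the conditional tails decay geometrically.
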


\begin{proof}
	It suffices to demonstrate that the lemma holds true for $\Lambda$ consisting of a single vertex $x$,
	and for $\nu$ of the form $\gamma_\Lambda(\cdot,\psi)$.
	The full result may be recovered by first averaging over all $\psi$
	and then inducting on the number of vertices in $\Lambda$.
		Write $p_k$ for $\nu(\phi_x=k)$.
		The goal is to prove that $p_0\leq\sum_{k>0}p_k$.
	For any $k$, we have
	\[
		\log\frac{p_{k+1}}{p_k}=-\beta\sum_{y\sim x}(V_{yx}(k+1-\psi_y)-V_{yx}(k-\psi_y))\geq -\beta m\geq -\log 2,
	\]
	where the sum is over the neighbours $y$ of $x$, and where $m$ denotes the maximum degree of the graph $\mathbb G$.
	The first inequality is due to the Lipschitz property of the potential $V$;
	the second inequality is due to the choice of $\beta$.
	In particular, this implies that $p_{k+1}\geq p_k/2$.
	By comparison with a geometric distribution with parameter $1/2$,
	we observe that $p_0\leq\sum_{k>0}p_k$.
	This proves the lemma.
\end{proof}

We now start the proof of delocalisation for symmetric potentials.

\begin{proof}[Proof of Theorem~\ref{theorem:delocalisation_symmetric}]
	Let \(K_n\) denote the event that the exterior contour 
	\[\Gamma_n:=\Gamma(\{\phi\geq 0\}\cap\Lambda_n)\] surrounds \(r\).
	We sometimes write $\Gamma$ for $\Gamma_n$ for brevity.
	Note that the event $K_n$ occurs with high probability since the event $A_0^\geq$ occurs almost surely.
	Write $\mu_n$ for the conditioned measure $\mu(\cdot|K_n)$.
	The expectation $\mu_n(\phi_r)$ can be calculated by first 
	conditioning on the contour $\Gamma$, then averaging over all such contours.
	Formally, write 
	\[
		\mu_n(\phi_r)
		=
		\int\mu_n^{\Gamma}(\phi_r)d\mu_n(\Gamma),
	\]
	where $\mu_n^{\Gamma}$ denotes the conditioned measure.
	If we write $\Delta:=\Delta(\{\phi\geq 0\}\cap\Lambda_n)$ for the corresponding interior, then
	\[
		\mu_n^{\Gamma}=\mu_n^{\Gamma}\gamma_{\Delta},
	\]
	simply because the exploration which leads to the discovery of the contour $\Gamma$
	does not appeal to the height of the vertices in $\Delta$;
	see the end of Subsection~\ref{subsec:ext_contours_def}.
	In other words, we have
	\[
		\mu_n(\phi_r)
		=
		\int(\mu_n^{\Gamma}\gamma_{\Delta})(\phi_r)d\mu_n(\Gamma).
	\]
	The previous lemma asserts that the distribution of the restriction of $\phi$ to $\Gamma$ in the conditioned measure $\mu_n^\Gamma$
	dominates the height function $X|_\Gamma$ in the measure $\alpha$.
	Monotonicity in boundary conditions and the Markov property therefore imply that
	\[
		(\mu_n^{\Gamma}\gamma_{\Delta})(\phi_r)\geq (\alpha\gamma_{\Delta})(X_r) = 1/2,
	\]
	where the final equality is due to symmetry of the potential $V$.
	In particular,
	\[
		\mu(\phi_r|K_n)=\mu_n(\phi_r)
		\geq
		1/2.
	\]
	The event $K_n$ occurs with high probability as $n\to\infty$,
	which proves that $\mu(\phi_r)\geq 1/2$. 
	This contradicts the assertion that $\mu(\phi_r)=0$.
\end{proof}

\section{Delocalisation at any slope}
\label{section:slope}

The following lemma implies Theorem~\ref{theorem:delocalisation_asymmetric}
due to Subsection~\ref{subsection:overview_random_surfaces}.

\begin{lemma}
\label{lemma:alternative_statement}
Under the hypotheses of Theorem~\ref{theorem:delocalisation_asymmetric},
the set \(\mathcal G_\mathcal L\) is empty.
\end{lemma}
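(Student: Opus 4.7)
By Subsection~\ref{subsection:overview_random_surfaces}, it suffices to show that $\mathcal G_\mathcal L$ is empty for a (possibly asymmetric) Lipschitz potential at $\beta\le(\log 2)/2m$; suppose for contradiction that $\mu\in\mathcal G_\mathcal L$ and pass to an ergodic extremal component. The first step is to produce a common integer level on which both sides of the contour argument can be run. Applying the planar no-coexistence result from Subsection~\ref{subsection:overview_random_surfaces} to the shift-invariant site-percolation measures associated with the level sets $\{\phi>k\}$, together with monotonicity in $k$ and ergodicity, yields integers $n^{-}\le n^{+}$ such that $A_k^{\le}$ holds $\mu$-almost surely for every $k\ge n^{-}$ and $A_k^{\ge}$ holds $\mu$-almost surely for every $k\le n^{+}$. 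Since integer shifts of $\mu$ remain Gibbs for the same specification (the Hamiltonian depends only on gradients), we may assume $0\in[n^{-},n^{+}]$, so that both $A_0^{\ge}$ and $A_0^{\le}$ hold almost surely. This step replaces the appeal to Lemma~\ref{lemma:exploration}, which crucially required $V$ to be symmetric.

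The plan is then to run the contour argument of the proof of Theorem~\ref{theorem:delocalisation_symmetric} on both sides simultaneously at level $0$. At $\beta\le(\log 2)/2m$ the Lipschitz bound yields the sharper ratio $p_{k+1}/p_k\in[2^{-1/2},2^{1/2}]$ uniformly in the single-site conditioning, which is strong enough for the geometric-series comparison in the proof of Lemma~\ref{lemma:domination_simple} to produce two dual Bernoulli dominations: conditional on $\phi_x\ge 0$, $\phi_x$ stochastically dominates an i.i.d.\ Bernoulli$(1/2)$ on $\{0,1\}$; and conditional on $\phi_x\le 0$, $\phi_x$ is stochastically dominated by the analogous uniform variable on $\{-1,0\}$. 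Letting $\Gamma_n^{\pm}$ be the exterior contours of $\{\phi\ge 0\}\cap\Lambda_n$ and $\{\phi\le 0\}\cap\Lambda_n$, with interiors $\Delta_n^{\pm}$ and surround events $K_n^{\pm}$, two runs of the exploration-plus-monotonicity argument then yield
\[
	\mu(\phi_r\mid K_n^{+})\ \ge\ \mathbb E\bigl[(\alpha\gamma_{\Delta_n^{+}})(\phi_r)\bigr],
	\qquad
	\mu(\phi_r\mid K_n^{-})\ \le\ \mathbb E\bigl[(\alpha\gamma_{\Delta_n^{-}})(\phi_r)\bigr]-1,
\]
the $-1$ coming from the distributional identity $-\alpha\stackrel{d}{=}\alpha-1$ valid for the symmetric Bernoulli boundary. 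Since $\mu(K_n^{\pm})\to 1$ and $\phi_r$ is integrable by log-concavity of its law, both conditional expectations converge to $\mu(\phi_r)$, forcing the two right-hand sides to differ by at least $1$ in the limit.

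I expect the main obstacle to lie precisely in this last comparison: the random interiors $\Delta_n^{+}$ and $\Delta_n^{-}$ carry different laws under $\mu$, so the two Bernoulli-boundary expectations on the right cannot be equated by the direct symmetry argument available in the proof of Theorem~\ref{theorem:delocalisation_symmetric}. A natural resolution is to work with nested contours, first revealing $\Gamma_n^{+}$ and then the outermost $\{\phi\le 0\}$-contour around $r$ inside its interior, so that both estimates eventually refer to the same innermost domain, with a controlled boundary contribution in the annulus between the two contours. The extra factor of two in the temperature bound ($(\log 2)/2m$ rather than $(\log 2)/m$) should supply precisely the quantitative slack required: the stronger ratio $p_{k+1}/p_k\ge 2^{-1/2}$ leaves enough room in the stochastic dominations to absorb the loss produced by the nested exploration and close the contradiction.
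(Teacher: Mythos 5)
There are two genuine gaps here, and they are precisely the points where the paper abandons the single-copy strategy. First, your level-selection step is not justified. Planar no-coexistence plus monotonicity in $k$ and ergodicity only give that for each $k$ at least one of $A_k^{\leq},A_k^{\geq}$ holds almost surely, and (since $\{\phi>k\}=\{\phi\geq k+1\}$ for integer heights) that the two families of levels cannot ``cross''; they do \emph{not} rule out the degenerate scenarios in which $\{\phi>k\}$ percolates for \emph{every} $k\in\mathbb Z$ (or $\{\phi<k\}$ for every $k$), in which case no common level with both $A_k^{\geq}$ and $A_k^{\leq}$ exists and your finite thresholds $n^{-}\leq n^{+}$ never materialise. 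In the symmetric case this degeneracy is excluded exactly by Lemma~\ref{lemma:exploration} together with integrability of $\phi_r$, and the paper points out that Lemma~\ref{lemma:exploration} is false for asymmetric $V$; your proposal supplies no substitute. Second, the step you yourself flag as the ``main obstacle'' is indeed fatal as written: the two bounds involve $\mathbb E[(\alpha\gamma_{\Delta_n^{+}})(\phi_r)]$ and $\mathbb E[(\alpha\gamma_{\Delta_n^{-}})(\phi_r)]$ over random interiors with different laws, and with an asymmetric potential neither is computable or comparable by symmetry. The nested-contour fix does not close this: after revealing the outer $\{\phi\geq0\}$ contour, the inner $\{\phi\leq0\}$ contour sits strictly inside it, so the lower bound still refers to the outer domain while the upper bound refers to the inner one, and on the inner contour the heights are $\leq0$, so they cannot also dominate the Bernoulli boundary needed to push the lower bound inward. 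The extra factor of two in $\beta$ plays no role in reconciling two different random domains.

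The paper resolves both difficulties at once with a duplication trick that your argument is missing. Take $\mu^2=\mu\times\mu$ with independent copies $(\phi,\phi')$ and set $\psi:=\phi'-\phi$. The law of $\psi$ is symmetric by construction, so the event $A_0^{\geq}$ for $\psi$ holds almost surely with no analogue of Lemma~\ref{lemma:exploration} needed — this replaces your level-selection step. Then, instead of comparing two separate explorations, one conditions on a single contour of $\{\psi\geq0\}$ together with the auxiliary field $\zeta$ (equal to $\phi+1$, $\phi$, or $\infty$ according to the sign of $\psi$) on that contour; this pins both copies to the \emph{same} contour and interior. The hypothesis $\beta\leq(\log2)/2m$ is what makes the single-site conditional estimate give $\nu'(\psi_x\geq2)\geq1/2$, hence $\phi|_\Gamma+1\preceq\phi'|_\Gamma$ in a coupling, and then monotonicity, the Markov property, and the gradient nature of the specification yield $\mu^2(\psi_r\mid K_n)\geq1$, contradicting $\mu^2(\psi_r)=0$. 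So the factor of two is spent on the jump-by-two estimate for $\psi$, not on absorbing losses from a nested exploration; without the second copy your two one-sided estimates cannot be brought onto a common domain, and the proof does not close.
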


All notations remain the same as in the previous section, except that 
$V$ is no longer required to be symmetric, and that we impose that $\beta\leq(\log2)/2m$.
In order to derive 
a contradiction, we suppose that $\mathcal G_\mathcal L$ is not empty,
and thus contains some ergodic extremal measure $\mu$.
We must change our strategy:
Lemma~\ref{lemma:exploration} is false whenever $V$ is not symmetric,
and we cannot say that $\alpha\gamma_\Lambda(X_r)=1/2$.
Rather than considering a single height function $\phi$,
we shall consider here the product measure $\mu^2=\mu\times\mu$,
and write $(\phi,\phi')$ for the random pair of independent height functions in this product measure.
Write $\psi:=\phi'-\phi$ for the height difference.
Note that $\mu^2$ is extremal since it is a product of extremal measures.
Indeed, it is straightforward to verify that any tail event for the product space
satisfies a zero-one law for $\mu^2$ by appealing to Fubini-style integration.
Note furthermore that the pair $(-\phi,\phi')$ satisfies the FKG inequality,
and therefore the function $\psi$---which depends in a monotone fashion on this pair---does so as well.
Obviously $\mu^2(\psi_r)=0$.
As before, introduce the events
\begin{equation}
\label{eq:def_perco_heights_slope}
	A_k^{\square} = \{\text{the set $\mathbb V\smallsetminus\{x:\psi_x\mathrel\square k\}$ does not percolate}\}
\end{equation}
for \(\square\in\{\leq,\geq\}\) and \(k\in\mathbb Z\).
Extremality implies that these events occur with probability zero or one for $\mu^2$.
By symmetry, $\mu(A_0^\leq)=\mu(A_0^\geq)$.
If both events occur almost never, then almost surely the sets $\{\psi>0\}$ and $\{\psi<0\}$
percolate simultaneously, a contradiction. 
Conclude that the event $A_0^\geq$ occurs almost surely.

Introduce a new random function $\zeta:\mathbb V\to\mathbb Z\cup\{\infty\}$ defined by
\[
	\zeta_x:=
	\begin{cases}
		\varphi_x +1 & \text{if $\varphi_x<\varphi_x'$,} \\
		\varphi_x& \text{if $\varphi_x=\varphi_x'$,} \\
		\infty &\text{if $\varphi_x>\varphi_x'$.}
		\end{cases}
\]
Notice that $\{\zeta<\infty\}=\{\psi\geq 0\}$ and that $\phi_x\leq\zeta_x\leq\phi_x'$
for each $x$ such that $\phi_x\leq\phi_x'$.
We shall study the law of the measure $\mu^2$ when conditioned on the values of $\zeta$.
The idea is to compare the law of $\phi$ and $\phi'$ directly 
in this conditioned measure.
We focus on the following lemma before doing so.
Recall the definition of the measure $\alpha$ with the random height function $X$.

\begin{lemma}
	Consider a finite set $\Lambda\subset\mathbb V$,
	and let $\nu$ denote a probability measure on the product space $(\Omega,\mathcal F)\times(\Omega,\mathcal F)$
	which satisfies $\nu=\nu(\gamma_\Lambda\times\gamma_\Lambda)$.
	Write $\nu'$ for the measure $\nu$ conditioned on the event that $\zeta|_\Lambda=a$
	for some $a\in\mathbb Z^\Lambda$.
	(In particular, $\zeta|_\Lambda<\infty$.)
	Then the law of $\psi$ in $\nu'$ dominates the law of $2X$ 
	in the measure $\alpha$
	when both height functions are restricted to $\Lambda$.
\end{lemma}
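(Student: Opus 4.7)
The plan is to mirror the proof of Lemma~\ref{lemma:domination_simple}: first reduce to a single-vertex version of the statement, then establish that version by an explicit computation that exploits the Lipschitz bound on~$V$. Specifically, it will suffice to consider $\Lambda=\{x\}$ and $\nu=(\gamma_{\{x\}}\times\gamma_{\{x\}})(\cdot,(\phi,\phi'))$ for a fixed pair of boundary height functions $(\phi,\phi')$; the general statement then follows by first averaging over the boundary pair and inducting on $|\Lambda|$, coupling a single-site Gibbs sampler for $\nu(\cdot\mid\zeta|_\Lambda=a)$ with independent $\mathrm{Bernoulli}(1/2)$ updates via a quantile coupling that preserves $\psi_x\geq 2X_x$ at each step.

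In the single-vertex case, $(\phi_x,\phi'_x)$ has density proportional to $f(\phi_x)g(\phi'_x)$, where
\[
	f(k):=\exp\Bigl(-\beta\sum_{y\sim x}V_{yx}(k-\phi_y)\Bigr),\qquad
	g(k):=\exp\Bigl(-\beta\sum_{y\sim x}V_{yx}(k-\phi'_y)\Bigr).
\]
The event $\{\zeta_x=a_x\}$ decomposes disjointly into: either (i) $\phi_x=\phi'_x=a_x$, giving $\psi_x=0$ with unnormalised mass $f(a_x)g(a_x)$; or (ii) $\phi_x=a_x-1$ and $\phi'_x=k$ for some $k\geq a_x$, giving $\psi_x=k-a_x+1\geq 1$ with unnormalised mass $f(a_x-1)g(k)$. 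The Lipschitz property together with $\beta\leq(\log 2)/2m$ confines every consecutive ratio $f(k+1)/f(k)$ and $g(k+1)/g(k)$ to $[e^{-\beta m},e^{\beta m}]\subset[2^{-1/2},2^{1/2}]$. From this one extracts both $f(a_x)\leq\sqrt{2}\,f(a_x-1)$ and the geometric tail estimate $\sum_{k\geq a_x+1}g(k)\geq g(a_x)\sum_{j\geq 1}2^{-j/2}=(\sqrt{2}+1)\,g(a_x)$. Combining these bounds,
\[
	f(a_x-1)\!\sum_{k\geq a_x+1}\!g(k)\,\geq\,(\sqrt{2}+1)\,f(a_x-1)g(a_x)\,\geq\,f(a_x)g(a_x)+f(a_x-1)g(a_x),
\]
which reads exactly as $\mathbb P(\psi_x\geq 2\mid\zeta_x=a_x)\geq 1/2$; since $2X_x\in\{0,2\}$ with equal probability, this yields $\psi_x\succeq 2X_x$ and hence the lemma.

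The only real subtlety is the geometric tail summation: the target $2X_x$ concentrates its upper mass at the value $2$ rather than at $1$, so one must control $\sum_{k\geq a_x+1}g(k)$ rather than $\sum_{k\geq a_x}g(k)$. This ``extra step'' of tail summation is precisely what forces the temperature restriction $\beta\leq(\log 2)/2m$ to be twice as restrictive as the bound $\beta\leq(\log 2)/m$ appearing in Lemma~\ref{lemma:domination_simple}; apart from this point, the argument is routine.
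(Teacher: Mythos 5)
Your proof is correct and follows essentially the same strategy as the paper: reduce to the single-site case $\Lambda=\{x\}$ and then compare against a geometric tail using the Lipschitz bound $|V(k+1)-V(k)|\leq 1$ together with $\beta\leq(\log 2)/2m$ to force all relevant consecutive ratios into $[2^{-1/2},2^{1/2}]$. The paper phrases the single-site step in terms of the conditional probabilities $p_k=\nu'(\psi_x=k)$ and the bound $p_{k+1}/p_k\geq 2^{-1/2}$, whereas you split the product density into the $f$-factor and $g$-factor and bound each separately, but the two bookkeepings encode the same inequalities and lead to the identical threshold $\nu'(\psi_x\geq 2)\geq 1/2$.
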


\begin{proof}
It suffices to prove the lemma for $\Lambda=\{x\}$;
the whole result then follows by induction.
Without loss of generality, $a=0$
and $\nu=\gamma_{\{x\}}(\cdot,\xi)\times\gamma_{\{x\}}(\cdot,\xi')$
for $\xi,\xi'\in\Omega$.
For the lemma, it suffices to prove that
\begin{equation}
	\label{eq:to_do}
	\nu'(\psi\geq 2)\geq 1/2
\end{equation}
where $\nu'$ is the measure $\nu$ conditioned on $\zeta_x=0$.
Note that $\nu'$-almost surely
\[
(\phi_x,\phi_x')
=
\begin{cases}
	(0,0)&\text{if $\psi_x=0$,}\\
	(-1,\psi_x-1)&\text{if $\psi_x>0$.}\\
\end{cases}
\]
Write $p_k:=\nu'(\psi_x=k)$ for all $k\geq 0$.
Since the values of $\phi$ and $\phi'$ are fixed on all 
vertices other than $x$,
we observe that the previous display implies
\[
\log \frac{p_{k+1}}{p_k}\geq -\beta m\geq -(\log2)/2
\]  
by reasoning as in the proof of Lemma~\ref{lemma:domination_simple}.
This implies~\eqref{eq:to_do} through comparison with a geometric distribution of parameter $1-\sqrt 2$.
\end{proof}

If $\psi_x\geq 2$, then $\phi_x<\zeta_x<\phi_x'$.
This implies the following consequence of the previous lemma.
For the final conclusion, we use the fact that $-X+1$ and $X$ have the same law for the measure $\alpha$.

\begin{corollary}
	Assume the setting of the previous lemma.
	Then $\phi|_\Lambda\preceq a-X|_\Lambda$ and $a+X|_\Lambda\preceq\phi'|_\Lambda$
	for the measures $\nu'$ and $\alpha$.
	In particular,
	$\phi|_\Lambda+1\preceq\phi'|_\Lambda$
	in some nontrivial coupling of the measure $\nu'$ with itself.
\end{corollary}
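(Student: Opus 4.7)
\emph{Proof proposal.} The plan is to leverage the coupling between $\nu'$ and $\alpha$ supplied by the previous lemma, which ensures $\psi_x\geq 2X_x$ for every $x\in\Lambda$. The key structural observation is that conditioning on $\zeta|_\Lambda=a$ makes $(\phi_x,\phi_x')$ a deterministic function of $\psi_x\in\mathbb Z_{\geq 0}$: when $\psi_x=0$ both heights equal $a_x$, while when $\psi_x>0$ the definition of $\zeta$ forces $\phi_x=a_x-1$ and $\phi_x'=a_x-1+\psi_x$. This dictionary is essentially the only structural fact needed.

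With this in hand, both stochastic domination claims reduce to a two-line case analysis on $X_x\in\{0,1\}$. For $\phi|_\Lambda\preceq a-X|_\Lambda$: if $X_x=0$, then any $\psi_x\geq 0$ yields $\phi_x\in\{a_x-1,a_x\}\leq a_x-X_x$; and if $X_x=1$, then $\psi_x\geq 2>0$ forces $\phi_x=a_x-1=a_x-X_x$. The argument for $a+X|_\Lambda\preceq\phi'|_\Lambda$ is symmetric, exploiting $\phi_x'\geq a_x+\max(\psi_x-1,0)$ in each sub-case.

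For the ``in particular'' clause, I would take two independent samples from $\nu'$, coupling the first to an $\alpha$-copy $X^{(1)}$ via the first part (so that $\phi_1\leq a-X^{(1)}$) and the second to an independent $\alpha$-copy $X^{(2)}$ via the second part (so that $\phi_2'\geq a+X^{(2)}$). Invoking the hint that $1-X^{(1)}$ and $X^{(1)}$ have the same law under $\alpha$, I would reassemble the joint law under the antithetic identification $X^{(2)}=1-X^{(1)}$, by disintegrating each of the two couplings against its $\alpha$-marginal and resampling the pairs conditionally on these antithetic values. This preserves both $\nu'$-marginals and yields $\phi_2'\geq a+1-X^{(1)}\geq\phi_1+1$ on $\Lambda$, which is the desired domination.

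The only real obstacle is the bookkeeping of this reassembly: one must check that after disintegrating against the $\alpha$-marginal and forcing $X^{(2)}=1-X^{(1)}$, the resulting joint law still has $\nu'$ on each coordinate. This reduces to the symmetry $1-X\stackrel{d}{=}X$ under $\alpha$ already mentioned, so no genuine difficulty arises; the rest is a routine verification.
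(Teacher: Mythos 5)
Your proposal is correct and follows the same route the paper sketches in its two hints before the corollary: use the coupling $\psi|_\Lambda\succeq 2X|_\Lambda$ from the previous lemma together with the dictionary $(\phi_x,\phi_x')=(a_x,a_x)$ if $\psi_x=0$ and $(a_x-1,a_x-1+\psi_x)$ if $\psi_x>0$ for the first two dominations, and then chain via the symmetry $1-X\stackrel{d}{=}X$ under $\alpha$. Your explicit disintegrate-and-reassemble construction of the self-coupling is a concrete realisation of the transitivity step the paper leaves implicit; no gap.
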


\begin{proof}[Proof of Lemma~\ref{lemma:alternative_statement}]
		Let \(K_n\) denote the event that the exterior contour 
	\[\Gamma_n:=\Gamma(\{\psi\geq 0\}\cap\Lambda_n)\] surrounds \(r\).
	We sometimes write $\Gamma$ for $\Gamma_n$ for brevity.
	Note that the event $K_n$ occurs with high probability for $\mu^2$ since the event $A_0^\geq$ occurs almost surely.
	Write $\mu_n$ for the conditioned measure $\mu^2(\cdot|K_n)$.
	The expectation $\mu_n(\psi_r)$ can be calculated by first 
	conditioning on the contour $\Gamma$ and on $a:=\zeta|_\Gamma$, then averaging over all such contours.
	Formally, write 
	\[
		\mu_n(\psi_r)
		=
		\int\mu_n^{\Gamma,a}(\psi_r)d\mu_n(\Gamma,a),
	\]
	where $\mu_n^{\Gamma,a}$ denotes the conditioned measure.
	If we write $\Delta:=\Delta(\{\psi\geq 0\}\cap\Lambda_n)$ for the corresponding interior, then
	\[
		\mu_n^{\Gamma,a}=\mu_n^{\Gamma,a}(\gamma_{\Delta}\times\gamma_\Delta),
	\]
	simply because the exploration which leads to the discovery of the contour $\Gamma$
	does not appeal to the height of the vertices in $\Delta$.
	In other words, we have
	\[
		\mu_n(\psi_r)
		=
		\int(\mu_n^{\Gamma,a}(\gamma_{\Delta}\times\gamma_\Delta))(\psi_r)d\mu_n(\Gamma,a).
	\]
	The previous corollary says that $\mu_n^{\Gamma,a}$ can be coupled with itself in such a way
	that $\phi|_\Gamma+1\preceq\phi'|_\Gamma$.
	Monotonicity and the Markov property, together with the fact that $\gamma$ is a gradient specification,
	imply that
	\[
		(\mu_n^{\Gamma,a}(\gamma_{\Delta}\times\gamma_\Delta))(\phi_r)+1\leq (\mu_n^{\Gamma,a}(\gamma_{\Delta}\times\gamma_\Delta))(\phi_r').
	\]
	In particular, $(\mu_n^{\Gamma,a}(\gamma_{\Delta}\times\gamma_\Delta))(\psi_r)\geq 1$, and
	\[
		\mu^2(\psi_r|K_n)=\mu_n(\psi_r)\geq 1.
	\]
	The event $K_n$ occurs with high probability as $n\to\infty$,
	which proves that $\mu^2(\psi_r)\geq 1$. 
	This contradicts the observation that $\mu^2(\psi_r)=0$.
\end{proof}

\section{The FKG lattice condition for the absolute height}
\label{sec:abs_FKG}

The purpose of this section is to derive Theorem~\ref{thm:FKG_main_results}.
An important connection with the Ising model is discussed \emph{en route}
in Subsection~\ref{subsec:Ising},
and this connection will also play a role in Section~\ref{sec:finite_vol}.
In order to prove these results, we completely abandon the shift-invariant setting of Sections~\ref{section:background}--\ref{section:slope}.
If the interest of the reader is in the relationship between the two notions 
of delocalisation (Theorem~\ref{thm:fvdelocalisation}),
then one may simply accept Theorem~\ref{thm:FKG_main_results} and skip to Section~\ref{sec:finite_vol}.  
First recall the statement of Theorem~\ref{thm:FKG_main_results}.

\begin{theorem*}
\FKGLATTICESTATEMENT
\end{theorem*}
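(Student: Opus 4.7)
The plan is to exploit a sign--amplitude decomposition of the height function together with a connection to the ferromagnetic Ising model. Writing $\phi_x=\sigma_x|\phi_x|$ with $\sigma_x\in\{\pm1\}$ (and $\sigma_x:=+1$ by convention when $|\phi_x|=0$), the nonnegativity of $\psi$ forces $\sigma\equiv+1$ on $\mathbb V\smallsetminus\Lambda$. Since $V$ is symmetric, each edge weight $e^{-\beta V_{xy}(\sigma_yh_y-\sigma_xh_x)}$ depends on the signs only through the product $\sigma_x\sigma_y$, equalling $e^{-\beta V(h_y-h_x)}$ when this product is $+1$ and $e^{-\beta V(h_y+h_x)}$ otherwise. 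Convexity and symmetry of $V$ give $V(h_y+h_x)\geq V(h_y-h_x)$ for $h_x,h_y\geq0$, so conditionally on $|\phi|=h$ the field $\sigma$ is a ferromagnetic Ising model with $+$ boundary and nonnegative couplings $J_{xy}(h):=\tfrac{\beta}{2}[V(h_x+h_y)-V(h_y-h_x)]$. Summing out $\sigma$ yields
\[
	\mu(|\phi|=h)\;\propto\;2^{-n_0(h)}\exp\!\bigl(-\beta\bar U(h)\bigr)\,Z_+^{\mathrm{Ising}}\!\bigl(J(h)\bigr),
\]
where $n_0(h):=|\{x\in\Lambda:h_x=0\}|$ and $\bar U(h):=\tfrac{1}{2}\sum_{xy\in\mathbb E(\Lambda)}[V(h_y-h_x)+V(h_y+h_x)]$.

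The FKG lattice condition for $|\phi|$ is Holley's criterion, namely supermodularity of $h\mapsto\log\mu(|\phi|=h)$, which I would establish factor by factor. The counting contribution is modular because $\mathbf{1}_{h_x=0}+\mathbf{1}_{h'_x=0}=\mathbf{1}_{(h\vee h')_x=0}+\mathbf{1}_{(h\wedge h')_x=0}$ vertexwise. For the bulk energy a direct second-difference computation shows that the mixed second difference of $\bar U$ per edge $xy$ equals $\tfrac{1}{2}[V^{(2)}(h_x+h_y+1)-V^{(2)}(h_y-h_x)]$; this is nonpositive by the $|\cdot|$-FKG hypothesis, since $V^{(2)}$ is even, nonincreasing on $\mathbb Z_{\geq0}$, and $|h_y-h_x|\leq h_x+h_y<h_x+h_y+1$. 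Hence $\bar U$ is submodular and $-\beta\bar U$ is supermodular.

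The main obstacle is to show that $h\mapsto\log Z_+^{\mathrm{Ising}}(J(h))$ is supermodular. I would combine two ingredients. First, each $J_{xy}(h_x,h_y)$ is coordinate-wise nondecreasing and supermodular in $(h_x,h_y)$: supermodularity follows from the mixed second difference $\tfrac{\beta}{2}[V^{(2)}(h_x+h_y+1)+V^{(2)}(h_y-h_x)]\geq0$ (convexity of $V$), and monotonicity similarly. Second, by the Griffiths--Kelly--Sherman inequality (GKS~II) for the ferromagnetic Ising model with $+$ boundary, $F(J):=\log Z_+^{\mathrm{Ising}}(J)$ is coordinate-wise nondecreasing, convex and supermodular on $[0,\infty)^{\mathbb E(\Lambda)}$. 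When $x\not\sim y$, the couplings modified by $h\to h+e_x$ and by $h\to h+e_y$ act on disjoint edge sets, so $J(h+e_x)\vee J(h+e_y)=J(h+e_x+e_y)$ and $J(h+e_x)\wedge J(h+e_y)=J(h)$, and the required mixed-difference inequality for $F\circ J$ is immediate from supermodularity of $F$. When $x\sim y$ only the coupling $J_{xy}$ is modified by both increments and breaks this identity; the mixed second difference of $F\circ J$ then reduces to verifying an inequality of the shape
\[
	\int_{M}^{M^+}\!\partial_{J_{xy}}F\,dJ_{xy}\;\geq\;\int_{m^-}^{m}\!\partial_{J_{xy}}F\,dJ_{xy}
\]
(with the other couplings held at $J(h+e_x+e_y)$ and $J(h)$ respectively), where $m^-\leq m\leq M\leq M^+$ and $M^+-M\geq m-m^-$ encode the coordinate-wise monotonicity and the supermodularity of $J_{xy}$ in $h$. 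Since $\partial_{J_{xy}}F=\langle\sigma_x\sigma_y\rangle$ is nondecreasing in every coupling by GKS~II, the left integrand dominates the right integrand at matched arguments, and comparison of the two interval lengths closes the inequality. The delicate part is tracking the simultaneous changes of all couplings touching $x$ or $y$ throughout this final step; once this is carried out, assembling the three factors yields Holley's criterion for $|\phi|$ and hence the claimed FKG lattice condition.
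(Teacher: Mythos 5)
Your proof takes essentially the same approach as the paper: the sign--amplitude decomposition into the modular counting factor, the submodular bulk energy $\bar U$ (same second-difference calculation exploiting monotonicity of $V^{(2)}$ on $\mathbb Z_{\geq0}$), and the ferromagnetic Ising partition function with $+$ boundary conditions, whose log-supermodularity in $h$ you reduce to the GKS inequalities. The ``delicate'' step you leave open when $x\sim y$ is precisely what the paper closes via Lemma~\ref{lemma:exp_gks}, an exponential form of GKS applied to ratios $Z^+_\Lambda(K^{\xi'})/Z^+_\Lambda(K^{\xi^{ab}})$ rewritten as Gibbs expectations, which treats the shared edge $uv$ and the two disjoint star-increments $H_u,H_v$ uniformly without the case split or integral comparison you sketch.
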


Assume the setting of this theorem throughout this section.
We shall assume simply that the family $V=(V_{xy})_{xy\in\mathbb E}$
consists of identical potential functions;
the proof is no different for the case that they are not identical.
Abuse notation by simply writing $V$ for each potential function $V_{xy}$.

The section consists of three subsections.
In the first subsection, we work out a formula for the probability under $\mu$
that $|\phi|$ is equal to some specific function.
It turns out that this probability can be decomposed as a product of several factors,
where all factors are of a simple form except for one factor, 
which equals the partition function of a ferromagnetic Ising model with 
$+$ boundary conditions (the latter because the boundary condition \(\psi\) is nonnegative).
This is natural because conditional on $|\phi|$,
the sign of each height $\phi_x$ is governed by a ferromagnetic Ising model
where the interaction strengths between the signs are governed by $|\phi|$.
In the second subsection, we state some simple consequences of the
Griffiths-Kelly-Sherman (GKS) inequality,~\cite{Griffiths-1967,Kelly+Sherman-1968}, which are needed for the 
proof of the theorem.
The final subsection completes the proof.
The different factors of the decomposition are handled separately,
appealing to straightforward calculations (for the simple factors)
and the GKS inequalities (for the partition function that appears).

\subsection{The Ising model}
\label{subsec:Ising}
Let $\phi$ denote a height function which agrees with $\psi$ on the complement of $\Lambda$,
and write $\xi:=|\phi|$.
The idea is to write each height $\phi_x$ as the product of $\xi_x$
with some spin $\sigma_x=\pm1$.
We shall later see that symmetry and convexity of $V$
imply that the law of $\sigma$ conditional on $\xi$
is that of an Ising model, where the coupling constants depend on $\xi$.
The purpose of this subsection is to decompose the relative weight of each 
configuration in such a way that this becomes apparent.
We shall first introduce some convenient notations.
Write $\Sigma(\Lambda)$ for the set of spins $\sigma\in\{-1,1\}^\mathbb V$
which satisfy $\sigma_x=1$ for each $x\in\mathbb V\smallsetminus\Lambda$.
Define $\Sigma(\phi):=\{\sigma\in\Sigma(\Lambda):\phi=\sigma\xi\}$,
and write $z(\phi):=|\{x\in\Lambda:\phi_x=0\}|$.
Note that $|\Sigma(\phi)|=2^{z(\phi)}$.
For $\sigma\in\Sigma(\phi)$, one may write the potential $V(\phi_y-\phi_x)$
on each edge $xy\in\mathbb E$ as
\[
	\frac12 (V(\xi_y-\xi_x)+V(\xi_y+\xi_x))
	+ 
	\frac12
	\sigma_x\sigma_y
	 (V(\xi_y-\xi_x)-V(\xi_y+\xi_x)).
\]
Remark that it is important here that $V$ is an even function.
Recall the meaning of $Z_\Lambda(\psi)$ from Definition~\ref{def:specifications_gibbs_measures},
and write the non-normalised probability $Z_\Lambda(\psi)\mu(\phi)$ of the height function $\phi$
as 
\begin{align*}
	Z_\Lambda(\psi)\mu(\phi)
	&=
	2^{-z(\phi)}\sum_{\sigma\in\Sigma(\phi)} e^{-\sum_{xy\in\mathbb E(\Lambda)}V(\phi_y-\phi_x)}
	\\
	&
	=
	2^{-z(\xi)}
	e^{
		-\frac12
		\sum_{xy\in\mathbb E(\Lambda)}
			(V(\xi_y-\xi_x)+V(\xi_y+\xi_x))
	}
	\\
	&
	\color{white}=\color{black}\quad
	\cdot
	\left[
	\sum\nolimits_{\sigma\in\Sigma(\phi)}
		e^{-\frac12
		\sum_{xy\in\mathbb E(\Lambda)}
			\sigma_x\sigma_y
	 		(V(\xi_y-\xi_x)-V(\xi_y+\xi_x))
	 	}
	\right].
\end{align*}
This decomposition is convenient because the first two factors depend only on $\xi$,
and because the last factor is reminiscent of the Ising model.
Indeed, if one is after an expression for $Z_\Lambda(\psi)\mu(|\phi|=\xi)$
rather than $Z_\Lambda(\psi)\mu(\phi)$,
then one simply replaces $\Sigma(\phi)$ by $\Sigma(\Lambda)$
in the previous display.
In that case, one obtains that $Z_\Lambda(\psi)\mu(|\phi|=\xi)$
equals
\begin{equation}
\label{eq:decomposition}
2^{-z(\xi)}
e^{
	-\frac12
		\sum_{xy\in\mathbb E(\Lambda)}
			(V(\xi_y-\xi_x)+V(\xi_y+\xi_x))
}
Z_\Lambda^+(K^\xi)
,
\end{equation}
where the family $K^\xi=(K^\xi_{xy})_{xy\in\mathbb E}$ is defined by 
\begin{equation}
\label{eq:def_K_xi}
	K^\xi_{xy}:=-\frac12(V(\xi_y-\xi_x)-V(\xi_y+\xi_x))
\end{equation}
and where $Z_\Lambda^+(K^\xi)$ denotes the normalisation constant of the Ising model
in the set $\Lambda$ with boundary conditions $1$ and with the interaction strengths 
provided by $K^\xi$,
that is,
\[
Z_\Lambda^+(K^\xi):=
\sum_{\sigma\in\Sigma(\Lambda)}
		e^{
		\sum_{xy\in\mathbb E(\Lambda)}
			\sigma_x\sigma_yK_{xy}^\xi}.
\]
Note that $K^\xi_{xy}\geq 0$ since $\xi_x,\xi_y\geq 0$ and since $V$ is even and convex.
In other words, the Ising model is ferromagnetic.
Moreover, the family $K^\xi$ is increasing in $\xi$ for the same reasons.
Conclude that the expression in~\eqref{eq:decomposition}
is indeed a product of simple non-interacting factors, together with one factor 
that is the partition function of a ferromagnetic Ising model with $+$ boundary conditions.

\subsection{The GKS inequality}

Let us now state a simple consequence of the GKS inequality.
Let $\nu^\xi$ denote the Ising model corresponding to the partition function $Z_\Lambda^+(K^\xi)$,
that is, $\nu^\xi$ is a probability measure on $\Sigma(\Lambda)$ defined by
\[
	\nu^\xi(\sigma):=\frac1{Z_\Lambda^+(K^\xi)}e^{
		\sum_{xy\in\mathbb E(\Lambda)}
			\sigma_x\sigma_yK_{xy}^\xi}
\]
for each $\sigma\in\Sigma(\Lambda)$.
We shall also write $\langle\cdot\rangle_\xi$
for $\nu^\xi(\cdot)$.
Recall that this measure is a ferromagnetic Ising model with positive boundary conditions;
in particular, the GKS inequality applies.
For any $A\subset\mathbb V$, write $\sigma_A:=\prod_{x\in A}\sigma_x$.
Then the GKS inequalities assert that
\[
	\langle\sigma_A\rangle_\xi\geq 0
	\qquad\text{and}\qquad
	\langle\sigma_A\sigma_B\rangle_\xi\geq \langle\sigma_A\rangle_\xi\langle\sigma_B\rangle_\xi
\]
for any $A,B\subset\mathbb V$. We shall use these inequalities in the following form.

\begin{lemma}[GKS inequalities]
\label{lemma:exp_gks}
For any functions $H,H':\mathbb E(\Lambda)\to\mathbb R_{\geq 0}$, we have
\[
	\langle e^{\sum_{xy}\sigma_{xy}H_{xy}}\rangle_\xi
	\geq 1
	\qquad\text{and}\qquad
	\langle e^{\sum_{xy}\sigma_{xy}(H_{xy}+H_{xy}')}\rangle_\xi
	\geq 
	\langle e^{\sum_{xy}\sigma_{xy}H_{xy}}\rangle_\xi
	\langle e^{\sum_{xy}\sigma_{xy}H_{xy}'}\rangle_\xi,
\]
where all sums are over $\mathbb E(\Lambda)$.
\end{lemma}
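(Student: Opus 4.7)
The plan is to derive both inequalities from the GKS inequalities $\langle\sigma_A\rangle_\xi\geq 0$ and $\langle\sigma_A\sigma_B\rangle_\xi\geq\langle\sigma_A\rangle_\xi\langle\sigma_B\rangle_\xi$ by expanding the exponentials as (absolutely convergent) power series in the variables $H_{xy}$ and applying the GKS inequalities termwise. Since $H,H'\geq 0$, every coefficient produced by these expansions is nonnegative, so termwise positivity propagates to the whole sum.

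For the first inequality I would write
\[
e^{\sum_{xy}\sigma_{xy}H_{xy}}=\sum_{n\geq 0}\frac1{n!}\Bigl(\sum_{xy\in\mathbb E(\Lambda)}\sigma_{xy}H_{xy}\Bigr)^n,
\]
take the expectation under $\langle\cdot\rangle_\xi$, and observe that the $n=0$ term contributes exactly $1$. For each $n\geq 1$, the multinomial expansion yields a sum of terms of the form (nonnegative monomial in $H$) times $\prod_i\sigma_{x_iy_i}$; using $\sigma_x^2=1$, this spin product collapses to $\sigma_A$ for some $A\subset\mathbb V$ (the vertices appearing an odd number of times as an endpoint of the chosen edges). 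GKS~I then gives $\langle\sigma_A\rangle_\xi\geq 0$, so every term past $n=0$ has nonnegative expectation and the total is $\geq 1$.

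For the second inequality I would expand both sides as double power series. Setting $X:=\sum_{xy}\sigma_{xy}H_{xy}$ and $Y:=\sum_{xy}\sigma_{xy}H'_{xy}$, the left-hand side becomes $\sum_{n,m\geq 0}\frac{1}{n!\,m!}\langle X^n Y^m\rangle_\xi$ while the right-hand side becomes $\sum_{n,m\geq 0}\frac{1}{n!\,m!}\langle X^n\rangle_\xi\langle Y^m\rangle_\xi$. Expanding $X^n$ and $Y^m$ as nonnegative linear combinations of $\sigma_A$ and $\sigma_B$ respectively (by the same spin-product collapse as above), the desired inequality reduces, coefficient by coefficient, to a nonnegative linear combination of instances of GKS~II.

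There is no real conceptual obstacle; this is the standard exponentiation of the Griffiths inequalities. The only points that need care are the exchange of sum and expectation (justified because $\Sigma(\Lambda)$ is finite, so $\langle\cdot\rangle_\xi$ is a finite weighted sum, and the exponential series has infinite radius of convergence for every fixed spin configuration) and the observation that any product $\prod_i\sigma_{x_iy_i}$ can be rewritten as a single $\sigma_A$, which is precisely what allows the spin expansion to meet the hypotheses of the classical GKS inequalities.
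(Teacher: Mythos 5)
Your argument is correct and is exactly the route the paper gestures at: expand each exponential as a Maclaurin series, use nonnegativity of $H,H'$ and the collapse $\prod_i\sigma_{x_iy_i}=\sigma_A$ to write each power as a nonnegative linear combination of $\sigma_A$'s, and then apply GKS~I and GKS~II termwise. Since $\Sigma(\Lambda)$ is finite the interchange of sum and expectation is immediate, and your reduction of the second inequality to $\langle X^nY^m\rangle_\xi\geq\langle X^n\rangle_\xi\langle Y^m\rangle_\xi$ for all $n,m$ is the ``careful manipulation of terms'' the paper leaves implicit.
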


\begin{proof}
The exponential function in each integrand can be written as a Taylor series around zero (Maclaurin series).
The desired inequalities then follow from a careful manipulation of the terms and the GKS inequalities. 
\end{proof}

\subsection{The proof of the inequality}
\begin{proof}[Proof of Theorem~\ref{thm:FKG_main_results}]
Consider now directly the inequality in the statement of the theorem.
Observe that $\mu(|\phi|=\xi)>0$ for any function $\xi$ which takes nonnegative values 
and equals $\psi$ on the complement of $\Lambda$, since the Hamiltonian in the definition of $\mu$
takes finite values only.
It is known that under this condition---which is known as \emph{finite energy}---it 
is sufficient to consider functions $\xi$ and $\zeta$ which are equal except perhaps at 
two sites, say $u,v\in\Lambda$; see~\cite[Theorem~2.22]{Grimmett-2006}.
For convenience we introduce the following notation:
the function $\xi$ is fixed from now on,
and for any $a,b\in\mathbb Z_{\geq 0}$, we shall write 
$\xi^{ab}$ for the unique height function which equals $\xi$
except that it takes heights $a$ and $b$ at the vertices $u$ and $v$ respectively.
Thus, it suffices to demonstrate that for any 
$a,a',b,b'\in\mathbb Z_{\geq 0}$
with $a'\geq a$ and $b'\geq b$,
we have 
\[
	\mu(|\phi|=\xi^{a'b'})\mu(|\phi|=\xi^{ab})\geq\mu(|\phi|=\xi^{a'b})\mu(|\phi|=\xi^{ab'}).
\]
In fact, it is straightforward to work out that it suffices to consider the case
that $a'=a+1$ and $b'=b+1$ (using induction to recover the other cases),
and by symmetry we may suppose that $a\leq b$ without loss of generality.
First multiply each probability in the previous equation with 
the positive constant $Z_\Lambda(\psi)$,
and recall~\eqref{eq:decomposition}.
Observe that 
\[
	z(\xi^{a'b'})+z(\xi^{ab})=z(\xi^{a'b})+z(\xi^{ab'}).
\]
Therefore it suffices to demonstrate that 
\[
f(\xi^{a'b'})
Z_\Lambda^+(K^{\xi^{a'b'}})
\cdot
f(\xi^{ab})
Z_\Lambda^+(K^{\xi^{ab}})
\geq
f(\xi^{a'b})
Z_\Lambda^+(K^{\xi^{a'b}})
\cdot
f(\xi^{ab'})
Z_\Lambda^+(K^{\xi^{ab'}})
\]
where $f(\xi):=e^{
	-\frac12
		\sum_{xy\in\mathbb E(\Lambda)}
			(V(\xi_y-\xi_x)+V(\xi_y+\xi_x))
}$.
Note that each factor is strictly positive.
We shall handle the two types of factors separately,
that is, we shall prove that 
\begin{equation}
	\label{eq:desired_inequalities}
	\frac{
		f(\xi^{a'b'})
		f(\xi^{ab})
	}{
		f(\xi^{a'b})
		f(\xi^{ab'})
	}
	\geq 1
	\qquad
	\text{and}
	\qquad
	\frac{
		Z_\Lambda^+(K^{\xi^{a'b'}})
		Z_\Lambda^+(K^{\xi^{ab}})
	}{
		Z_\Lambda^+(K^{\xi^{a'b}})
		Z_\Lambda^+(K^{\xi^{ab'}})
	}
	\geq 1.
\end{equation}
Write $c:=a+b$ and $d:=b-a\geq 0$.
Focus first on the equation on the left.
If one expands $f(\cdot)$ and cancels equal factors,
then it is immediate that the only terms surviving are those corresponding to the 
pair $\{u,v\}$.
This is convenient because the height of each height function is known
at these two vertices.
In particular, one obtains
\begin{align*}
	2\log
	\frac{
		f(\xi^{a'b'})
		f(\xi^{ab})
	}{
		f(\xi^{a'b})
		f(\xi^{ab'})
	}
	&
	=
	-
	(
		V(d)+V(c+2)
		+
		V(d)+V(c)
	)
	\\
	&\color{white}=\color{black}
	+
	(
		V(d-1)+V(c+1)
		+
		V(d+1)+V(c+1)
	)
	\\
	&
	=V^{(2)}(d)-V^{(2)}(c+1)\geq 0.
\end{align*}
The final inequality follows from the fact that 
$0\leq d\leq c+1$ and from the fact that $V^{(2)}$ is non-increasing over $\mathbb Z_{\geq 0}$
by definition of a super-Gaussian potential.
This proves the inequality on the left in~\eqref{eq:desired_inequalities}.

To finish the proof we must demonstrate that the inequality on the right in~\eqref{eq:desired_inequalities}
holds true.
Recall the definition of $K^\xi$ in~\eqref{eq:def_K_xi}
and recall that this family is increasing in $\xi$.
This implies that $K^\zeta-K^\xi$ is a nonnegative function for any $\zeta\geq\xi$,
which plays an essential role in following argument.
The idea is to divide each partition function in the equation on the right in~\eqref{eq:desired_inequalities}
by $Z_\Lambda^+(K^{\xi^{ab}})$.
Each ``normalised'' partition function can then be written 
as the expectation of some appropriate function in the measure $\langle\cdot\rangle_{\xi^{ab}}$.
For any vertex $x\in\mathbb V$, we shall write $\mathcal N_x$ for the set of neighbours of $x$ 
with the vertices $u$ and $v$ removed.
We shall first focus on the ratio $Z_\Lambda^+(K^{\xi^{a'b}})/Z_\Lambda^+(K^{\xi^{ab}})$.
If we define
\[
	H_u(\sigma):=\sum_{x\in\mathcal N_u}\sigma_{xu}(K_{xu}^{\xi^{a'b}}-K_{xu}^{\xi^{ab}}),
\]
then 
\[
	\frac{
		Z_\Lambda^+(K^{\xi^{a'b}})
	}{
		Z_\Lambda^+(K^{\xi^{ab}})
	}
	=
	\left\langle
		e^{H_u+\sigma_{uv}(K_{uv}^{\xi^{a'b}}-K_{uv}^{\xi^{ab}})}
	\right\rangle_{\xi^{ab}}.
\]
Similarly, we have 
\[
	\frac{
		Z_\Lambda^+(K^{\xi^{ab'}})
	}{
		Z_\Lambda^+(K^{\xi^{ab}})
	}
	=
	\left\langle
		e^{H_v+\sigma_{uv}(K_{uv}^{\xi^{ab'}}-K_{uv}^{\xi^{ab}})}
	\right\rangle_{\xi^{ab}},
\]
where the definition of $H_v$ equals that of $H_u$ except that $u$ and $\xi^{a'b}$
are replaced by $v$ and $\xi^{ab'}$ respectively.
For the final remaining partition function we have
\[
	\frac{
		Z_\Lambda^+(K^{\xi^{a'b'}})
	}{
		Z_\Lambda^+(K^{\xi^{ab}})
	}
	=
	\left\langle
		e^{H_u+H_v+\sigma_{uv}(K_{uv}^{\xi^{a'b'}}-K_{uv}^{\xi^{ab}})}
	\right\rangle_{\xi^{ab}}.
\] 
Assume for now the claim that
\begin{equation}
\label{eq:claim}
K_{uv}^{\xi^{a'b'}}-K_{uv}^{\xi^{ab}}
\geq
(K_{uv}^{\xi^{a'b}}-K_{uv}^{\xi^{ab}})
+
(K_{uv}^{\xi^{ab'}}-K_{uv}^{\xi^{ab}}),
\end{equation}
and write $D$ for the value on the left minus the value on the right.
Lemma~\ref{lemma:exp_gks} now asserts that 
\begin{align*}
	\frac{
		Z_\Lambda^+(K^{\xi^{a'b'}})
	}{
		Z_\Lambda^+(K^{\xi^{ab}})
	}
	&
	=
	\left\langle
		e^{H_u+H_v+\sigma_{uv}(K_{uv}^{\xi^{a'b'}}-K_{uv}^{\xi^{ab}})}
	\right\rangle_{\xi^{ab}}
	\\
	&
	\geq
	\left\langle
		e^{H_u+\sigma_{uv}(K_{uv}^{\xi^{a'b}}-K_{uv}^{\xi^{ab}})}
	\right\rangle_{\xi^{ab}}
	\left\langle
		e^{H_v+\sigma_{uv}(K_{uv}^{\xi^{ab'}}-K_{uv}^{\xi^{ab}})}
	\right\rangle_{\xi^{ab}}
		\left\langle
		e^{\sigma_{uv}D}
	\right\rangle_{\xi^{ab}}
	\\
	&
	\geq
	\frac{
		Z_\Lambda^+(K^{\xi^{a'b}})
	}{
		Z_\Lambda^+(K^{\xi^{ab}})
	}
	\frac{
		Z_\Lambda^+(K^{\xi^{ab'}})
	}{
		Z_\Lambda^+(K^{\xi^{ab}})
	},
\end{align*}
where for the final inequality we use the fact that $\langle
		e^{\sigma_{uv}D}
	\rangle_{\xi^{ab}}\geq 1$.
	This implies the desired inequality.
	It suffices to prove the claim that~\eqref{eq:claim} holds true.
	In other words, we must demonstrate that 
	\[
		K_{uv}^{\xi^{a'b'}}
		+
		K_{uv}^{\xi^{ab}}
		\geq
		K_{uv}^{\xi^{a'b}}
		+
		K_{uv}^{\xi^{ab'}}
		.
	\]
	Multiply either side by $-2$ and recall the definition of $K^\xi$ to see that the previous inequality is 
	equivalent to
	\[
		V(d)-V(c+2)
		+
		V(d)-V(c)
		\leq
		V(d-1)-V(c+1)
		+
		V(d+1)-V(c+1)
		.
	\]
	Rewrite this inequality in terms of $V^{(2)}$ to obtain
	\[
		-V^{(2)}(d)\leq V^{(2)}(c+1).
	\]
	This inequality is clearly satisfied since $V$ is convex.
\end{proof}


\subsection{Extensions to other \emph{a priori} spin distributions}
\label{subsec:extensions_of_FKG}

The above is---first and foremost---a discussion of the Hamiltonian.
The reference measure $\lambda$ never really plays a role:
the only property of $\lambda$ that is essential
is the property that $x$ and $-x$ are identically distributed (symmetry).
This observation allows for a number of extensions which we now address.
First, we may extend the theorem to real-valued height functions.
For this, we need to suitably adapt the definition of a super-Gaussian
potential.
The following definition applies to the remainder of this section only.

\begin{definition}[real-valued height functions]
Consider the measurable space $(\mathbb R,\mathcal B)$,
and let $\Omega:=\mathbb R^\mathbb V$ denote the collection 
of real-valued height functions endowed with the product $\sigma$-algebra
$\mathcal F$.
A \emph{symmetric super-Gaussian potential function}
is a convex symmetric function $V:\mathbb R\to\mathbb R$
such that the second derivative of $V|_{(0,\infty)}$ is
non-increasing (and well-defined almost everywhere).
A \emph{symmetric super-Gaussian potential} is a consistent (in the sense of Definition~\ref{definition:Hamiltonian}), shift-invariant
family of symmetric super-Gaussian potential functions.
For each finite $\Lambda\subset\mathbb V$,
the Hamiltonian $H_\Lambda:\Omega\to\mathbb R$ is defined exactly as before.
\end{definition}

We first generalise Theorem~\ref{thm:FKG_main_results} as follows.

\begin{theorem}[generalised FKG lattice condition]
	Let $\mathbb G=(\mathbb V,\mathbb E)$ denote a connected locally finite graph,
	and let $\Lambda$ denote a finite, strict subset of $\mathbb V$.
	Consider a symmetric super-Gaussian potential $V$,
	which on this occasion is not required to be shift-invariant.
	Let $\psi\in\Omega$ denote a height function taking nonnegative values only.
	Write $\Omega(\Lambda)$ for the set of height functions $\phi$
	which equal $\psi$ on the complement of $\Lambda$.
	Consider some nonnegative height function $\xi\in\Omega(\Lambda)$,
	and 
	write $\Omega(\xi)$ for the set of height functions 
	$\phi\in\Omega(\Lambda)$
	 which satisfy $|\phi|=\xi$.
	Define 
	\[
		d(\xi):=\sum_{\phi\in\Omega(\xi)}e^{-H_\Lambda(\phi)}
	\]
	for any nonnegative height function $\xi\in\Omega(\Lambda)$.
	This function then satisfies the FKG lattice condition,
	in the sense that 
\begin{equation}
\label{eq:fkgdensity}
	d(\xi\vee\zeta)d(\xi\wedge\zeta)\geq d(\xi)d(\zeta)
\end{equation}	for any nonnegative height functions $\xi,\zeta\in\Omega(\Lambda)$.
\end{theorem}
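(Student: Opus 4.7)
The plan is to mirror the proof of Theorem~\ref{thm:FKG_main_results}, since that argument relied only on symmetry of $V$ and the $|\cdot|$-FKG hypothesis on $V''$, not on any discreteness of the reference measure $\lambda$. First, by continuity of the map $\xi\mapsto d(\xi)$ together with an inclusion-exclusion identity that makes the factors $2^{z(\xi)}$ align properly in the limit, it suffices to prove~\eqref{eq:fkgdensity} for $\xi,\zeta$ strictly positive on $\Lambda$. Second, by a standard reduction (log-supermodularity on a product lattice reduces to the pair-wise case), one need only check the inequality for the four configurations $\xi^{a'b'}, \xi^{ab}, \xi^{a'b}, \xi^{ab'}$, where $\xi^{ab}$ is the unique function agreeing with $\xi$ off $\{u,v\}$ and taking values $a,b$ at two fixed sites $u,v\in\Lambda$: for every $0<a\leq a'$ and $0<b\leq b'$,
\[
	d(\xi^{a'b'})\,d(\xi^{ab})\geq d(\xi^{a'b})\,d(\xi^{ab'}).
\]

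For this pair-wise inequality, I would apply the decomposition $d(\xi)=f(\xi)\,Z_\Lambda^+(K^\xi)$ from Subsection~\ref{subsec:Ising} (in which the factor $2^{-z(\xi)}$ equals $1$) and handle the two factors independently. Set $\epsilon:=a'-a$, $\delta:=b'-b$, $c:=a+b$, and $D:=b-a$ (WLOG $a\leq b$), and write
\[
	I_1:=\int_0^\delta\!\int_0^\epsilon V''(D-\epsilon+s+t)\,dt\,ds,\qquad I_2:=\int_0^\delta\!\int_0^\epsilon V''(c+s+t)\,dt\,ds,
\]
interpreting $V''$ as the Radon measure $dV'$ when $V$ is not twice differentiable. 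Expanding $\log\bigl(f(\xi^{a'b'})f(\xi^{ab})/(f(\xi^{a'b})f(\xi^{ab'}))\bigr)$ and cancelling all edges other than $uv$ yields $\tfrac12(I_1-I_2)$, so the $f$-factor sub-inequality becomes $I_1\geq I_2$. For the Ising partition-function factor, the GKS argument from the proof of Theorem~\ref{thm:FKG_main_results} (via Lemma~\ref{lemma:exp_gks}) reduces matters to the supermodularity claim $K_{uv}^{\xi^{a'b'}}+K_{uv}^{\xi^{ab}}\geq K_{uv}^{\xi^{a'b}}+K_{uv}^{\xi^{ab'}}$; direct expansion shows this is equivalent to $I_1+I_2\geq 0$, which holds automatically because $V''\geq 0$ by convexity.

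The main obstacle is therefore the real-valued integral inequality $I_1\geq I_2$ for $0\leq D\leq c$ and $\epsilon,\delta\geq 0$. After the substitutions $\tau=D-\epsilon+s+t$ on the left and $\tau=c+s+t$ on the right, it reduces (for each fixed $s\in[0,\delta]$) to comparing $\int_{D-\epsilon+s}^{D+s}V''(\tau)\,d\tau$ with $\int_{c+s}^{c+s+\epsilon}V''(\tau)\,d\tau$: two integrals of $V''$ over intervals of equal length $\epsilon$, the first closer to the origin than the second (since $D\leq c$). When the first interval lies in $[0,\infty)$, monotonicity of $V''$ on $[0,\infty)$ gives a pointwise dominance and the bound is immediate. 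When the first interval straddles $0$, one splits it at $0$, reflects the negative piece using the symmetry $V''(-x)=V''(x)$, and then compares the two resulting nonnegative subintervals of total length $\epsilon$ with the right-hand interval by shifting each piece toward the origin and again invoking monotonicity of $V''$ on $[0,\infty)$. This rearrangement is the technical crux; all other ingredients (the decomposition, the GKS step, and the reductions to strict positivity and to pair-wise form) transfer essentially verbatim from the proof of Theorem~\ref{thm:FKG_main_results}.
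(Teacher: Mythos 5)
Your argument is correct but takes a genuinely different route from the paper's. The paper's proof is a short discretisation: it observes that the real-valued $|\cdot|$-FKG hypothesis implies the discrete one for $V|_{\varepsilon\mathbb Z}$, invokes Theorem~\ref{thm:FKG_main_results} on the lattice $\varepsilon\mathbb Z$, and passes to the limit $\varepsilon\to 0$ after rounding each height $\phi$ to $\varepsilon\lceil\phi/\varepsilon\rceil$ (a choice of rounding that preserves the zero set, which quietly disposes of the $2^{z(\xi)}$ discontinuity you must handle by hand). By contrast, you redo the integer-valued argument from scratch in the continuum: same decomposition $d(\xi)=f(\xi)\,Z_\Lambda^+(K^\xi)$, same GKS step via Lemma~\ref{lemma:exp_gks}, but with the paper's finite difference $V^{(2)}(d)-V^{(2)}(c+1)\geq 0$ replaced by the integral inequality $I_1\geq I_2$, which you establish by splitting the interval at zero, reflecting by $V''(-x)=V''(x)$, and invoking monotonicity of $V''$ on $[0,\infty)$. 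This is a valid alternative with two trade-offs. On the one hand, it avoids the implicit (and not entirely trivial) claim in the paper's proof that the continuous $|\cdot|$-FKG condition passes to the tent-smoothed second difference $V^{(2)}(k)=\int_0^1\!\int_0^1 V''(k-1+s+t)\,ds\,dt$ at $k=0$ versus $k=1$ (one needs a unimodality-of-convolution argument or your splitting trick there as well), so in that sense your argument is more self-contained. On the other hand, your reduction to strictly positive $\xi,\zeta$ via "continuity plus an inclusion-exclusion identity" deserves to be stated precisely: $d$ is \emph{not} continuous at configurations with zeros, but jumps by $2^{|\{x:\xi_x=0\}|}$ when approached from above, and the key observation is that $|\{x:(\xi\vee\zeta)_x=0\}|+|\{x:(\xi\wedge\zeta)_x=0\}|=|\{x:\xi_x=0\}|+|\{x:\zeta_x=0\}|$ so the jump factors cancel; and the interpretation of $V''$ as a Radon measure requires a little care when forming $I_1,I_2$ (the pointwise fourth-order difference of $V$ is the cleaner object), whereas the paper's discretisation sidesteps regularity questions entirely.
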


\begin{proof}
Suppose that $V$ is some symmetric super-Gaussian potential function.
Then the potential function $V|_\mathbb Z$ is super-Gaussian
in the sense of Definition~\ref{definition:Hamiltonian}.
This proves that~\eqref{eq:fkgdensity} is satisfied
 whenever $\psi$, $\xi$, and $\zeta$ take integer values only.
For exactly the same reasons, one observes that~\eqref{eq:fkgdensity}
holds true whenever these functions take values in $\varepsilon\mathbb Z$
for fixed $\varepsilon>0$.
For any $\phi\in\Omega$ and $\varepsilon>0$, write $\phi^\varepsilon$
for the height function $\varepsilon\lceil\phi/\varepsilon\rceil$.
The general inequality in~\eqref{eq:fkgdensity}
may now be obtained by replacing each height function $\phi$
by $\phi^\varepsilon$ and sending $\varepsilon$ to zero,
noting that each factor $d(\phi^\varepsilon)$ tends to
$d(\phi)$ in this limit.
\end{proof}

This immediately leads to the following corollary.

\begin{corollary}
Assume the setting of the previous theorem,
and let $\lambda$ denote any symmetric $\sigma$-finite measure on $(\mathbb R,\mathcal B)$
such that the normalised probability measure 
\[
	\mu:=
	\frac1{Z}e^{-H_\Lambda}(\delta_{\psi|_{\mathbb V\smallsetminus\Lambda}}\times\lambda^\Lambda)
\]
is well-defined.
Then the function $|\phi|$ satisfies the FKG inequality in $\mu$.
\end{corollary}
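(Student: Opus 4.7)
The plan is to identify the law of $|\phi|$ under $\mu$ as a probability measure on the partially ordered product space $[0,\infty)^\Lambda$ whose density against a product reference measure satisfies the FKG lattice condition, and then invoke the classical FKG theorem to conclude.

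First, I would set up the pushforward. Let $\bar\lambda:=|\cdot|_*\lambda$ denote the pushforward of $\lambda$ under the absolute value map $\mathbb{R}\to[0,\infty)$. Because $\lambda$ is symmetric and $\sigma$-finite, so is $\bar\lambda$, and the single-variable identity
\[
\int F(|t|)\,g(t)\,d\lambda(t) \;=\; \int_{[0,\infty)}F(\xi)\,\tilde g(\xi)\,d\bar\lambda(\xi),
\]
with $\tilde g(\xi):=\tfrac12(g(\xi)+g(-\xi))$ for $\xi>0$ and $\tilde g(0):=g(0)$, iterates over $\Lambda$ to show that the law of $|\phi|_\Lambda$ under $\mu$ is absolutely continuous with respect to $\bar\lambda^\Lambda$ with density proportional to $2^{-|S(\xi)|}\,d(\xi)$ on functions $\xi\in[0,\infty)^\Lambda$ agreeing with $\psi$ on $\mathbb V\setminus\Lambda$. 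Here $S(\xi):=\{x\in\Lambda:\xi_x>0\}$ and $d$ is the function appearing in the previous theorem; the combinatorial factor $2^{-|S(\xi)|}$ arises because each of the $2^{|S(\xi)|}$ sign patterns on $S(\xi)$ produces the same $|\phi|$.

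Second, I would verify that this density is log-supermodular. The factor $d(\xi)$ satisfies the FKG lattice condition by the previous theorem. For the combinatorial factor, note that $S(\xi\vee\zeta)=S(\xi)\cup S(\zeta)$ and $S(\xi\wedge\zeta)=S(\xi)\cap S(\zeta)$ for nonnegative $\xi,\zeta$, hence by inclusion–exclusion $|S(\xi\vee\zeta)|+|S(\xi\wedge\zeta)|=|S(\xi)|+|S(\zeta)|$. The factor $2^{-|S(\cdot)|}$ therefore satisfies the FKG lattice condition with equality, and the product of the two factors inherits the inequality strictly from $d$.

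Finally, the classical FKG theorem, applied to a probability measure on a distributive lattice with a log-supermodular density against a product reference measure, yields the FKG inequality. This gives positive correlation of increasing functions of $|\phi|_\Lambda$ under $\mu$, which is exactly the claim. The main technical subtlety is justifying this last step when $\bar\lambda$ is only $\sigma$-finite; I expect to handle it by an approximation argument, truncating $\lambda$ to a symmetric finite measure supported on a large compact interval and discretising to a measure on $\varepsilon\mathbb Z$ (both operations respect symmetry and the Hamiltonian structure, so the previous theorem applies on the nose to each approximation), then passing to the limit $\varepsilon\downarrow 0$ and removing the truncation, using that the factor $d(\phi^\varepsilon)$ converges to $d(\phi)$ as in the proof of the previous theorem.
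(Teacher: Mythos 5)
Your proof is correct and takes essentially the same approach as the paper: identify the Radon–Nikodym derivative of the law of $|\phi|$ with respect to a symmetric product reference measure, check it is log-supermodular via the previous theorem, and invoke the (continuous) FKG inequality. The only cosmetic difference is that the paper takes the reference measure to be $\delta_{\psi|_{\mathbb V\smallsetminus\Lambda}}\times(\lambda|_{[0,\infty)})^\Lambda$, in which case the density is exactly $d/Z$ with no combinatorial correction, whereas you use the pushforward $\bar\lambda=|\cdot|_*\lambda$ and then must (correctly, but needlessly) verify that the extra factor $2^{-|S(\xi)|}$ is lattice-modular; the paper's choice avoids this step entirely.
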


\begin{proof}
Note that $d/Z$ is a Radon-Nikodym derivative of the law of $|\phi|$
in the measure $\mu$ with respect to the measure
$\delta_{\psi|_{\mathbb V\smallsetminus\Lambda}}\times(\lambda|_{[0,\infty)})^\Lambda$.
\end{proof}

Of principal interest is of course the Lebesgue measure,
although other choices for $\lambda$ are possible:
amongst them tilted versions of the counting measure on $\mathbb Z$
and of the Lebesgue measure on $\mathbb R$.

\section{Delocalisation in finite volume}
\label{sec:finite_vol}

This final section contains the proof of Theorem~\ref{thm:fvdelocalisation},
which asserts that the two notions of delocalisation are the same 
for symmetric super-Gaussian potentials.
The proof relies strongly on the FKG lattice condition for the absolute height
derived in the previous section.
Recall first the statement of Theorem~\ref{thm:fvdelocalisation}.

\begin{theorem*}
\TWONOTIONSTHEOREMSTATEMENT
\end{theorem*}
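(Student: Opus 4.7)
The plan is to use the FKG lattice condition from Theorem~\ref{thm:FKG_main_results} twice: once to establish the well-definedness and sequence-independence of the limit, and once to compare the finite-volume variance with the variance under a shift-invariant Gibbs measure.

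First I would establish monotonicity in the volume: for nested finite sets $\Lambda_1\subset\Lambda_2$ containing $r$, one has $\gamma_{\Lambda_1}(\phi_r^2,0)\leq\gamma_{\Lambda_2}(\phi_r^2,0)$. Since the boundary condition $0$ is nonnegative, Theorem~\ref{thm:FKG_main_results} provides the FKG lattice condition for $|\phi|$ under $\gamma_{\Lambda_2}(\cdot,0)$; a direct algebraic manipulation of the lattice inequality then shows that for any $\eta\leq\eta'$ taking nonnegative values, the conditional law of $|\phi|$ on $\Lambda_1$ given $|\phi||_{\Lambda_2\setminus\Lambda_1}=\eta'$ stochastically dominates the same law for $\eta$. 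The case $\eta=0$ reduces by the Markov property to the law of $|\phi|$ under $\gamma_{\Lambda_1}(\cdot,0)$, so averaging gives the claim. Setting $\alpha:=\sup_\Lambda\gamma_\Lambda(\phi_r^2,0)\in[0,\infty]$, any exhausting sequence $(\Delta_n)$ eventually contains every finite subset of $\mathbb V$, so $\gamma_{\Delta_n}(\phi_r^2,0)\to\alpha$ by a standard $\liminf/\limsup$ sandwich that uses the monotonicity in both directions.

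Next I would prove $\alpha<\infty\Leftrightarrow\mathcal G_\mathcal L\neq\emptyset$. If $\alpha<\infty$, then bounded second moments together with shift-invariance of the specification give tightness of the shift-averaged measures obtained from $\gamma_{\Delta_n}(\cdot,0)$ by averaging over a growing ball of translates in $\mathcal L$. Any weak sublimit is shift-invariant by construction and Gibbs by the standard DLR-preservation argument for quasilocal specifications, so it lies in $\mathcal G_\mathcal L$. Conversely, if $\mathcal G_\mathcal L\neq\emptyset$, pick an ergodic $\mu\in\mathcal G_\mathcal L$; Sheffield's log-concavity result recorded in Subsection~\ref{subsection:overview_random_surfaces} gives $\mu(\phi_r^2)<\infty$, and the DLR equation for $\mu$ rewrites this as $\int\gamma_{\Delta_n}(\phi_r^2,\psi)\,d\mu(\psi)$. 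A pointwise comparison $\gamma_{\Delta_n}(\phi_r^2,\psi)\geq\gamma_{\Delta_n}(\phi_r^2,0)$ valid for $\mu$-almost every $\psi$ then forces $\alpha\leq\mu(\phi_r^2)<\infty$. For $\psi\geq 0$ this comparison is immediate from the FKG lattice condition; for general $\psi$ I would use the Ising decomposition of Subsection~\ref{subsec:Ising}, combining the monotonicity of $K^\xi$ in $\xi$ with the GKS-based dominance of the ferromagnetic partition function $Z^+_\Lambda(K^\xi)$ over $Z^\tau_\Lambda(K^\xi)$ for arbitrary boundary sign patterns $\tau$.

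The hard part is precisely this last extension of the pointwise comparison from nonnegative boundary conditions to boundary conditions of mixed sign. Theorem~\ref{thm:FKG_main_results} is formulated only for nonnegative $\psi$, whereas an ergodic shift-invariant Gibbs measure generically charges boundary configurations of mixed sign. Bridging this gap requires careful use of the Ising--GKS representation of Section~\ref{sec:abs_FKG}, tracking the normalising constants of the induced laws of $|\phi|$ so that the dominance of the $+$-partition function over boundary patterns $\tau$ translates into a genuine stochastic statement for the marginals of $|\phi_r|$ themselves.
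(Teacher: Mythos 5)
Your overall architecture is close to the paper's: the monotonicity of $\gamma_\Lambda(\phi_r^2,0)$ in $\Lambda$ via the FKG lattice condition for $|\phi|$ plus the Markov property at zero-height circuits, the tightness/shift-averaging argument for $\alpha<\infty\Rightarrow\mathcal G_\mathcal L\neq\emptyset$, and the invocation of Sheffield's log-concavity to control the second moment under an ergodic Gibbs measure are all present in the actual proof. But the converse direction $\mathcal G_\mathcal L\neq\emptyset\Rightarrow\alpha<\infty$ contains a genuine gap that you yourself flag: you need the pointwise comparison $\gamma_{\Delta_n}(\phi_r^2,\psi)\geq\gamma_{\Delta_n}(\phi_r^2,0)$ for $\mu$-typical, and in general sign-changing, boundary data $\psi$, and Theorem~\ref{thm:FKG_main_results} (hence the whole FKG/Ising/GKS machinery of Section~\ref{sec:abs_FKG}) is stated and proved only for nonnegative boundary conditions. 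Your sketch of how to repair this via a dominance of $Z^+_\Lambda(K^\xi)$ over $Z^\tau_\Lambda(K^\xi)$ is at best the start of an argument: such a partition-function inequality does not by itself yield a stochastic comparison of the induced laws of $|\phi_r|$, because the two boundary conditions also change the weights $f(\xi)$ and constrain $\xi$ differently on $\partial\Lambda$, so the claim that $\phi_r^2$ is minimized by the zero boundary condition remains unproved (and is not obvious).

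The paper avoids this difficulty entirely. For the converse direction it does \emph{not} attempt a pointwise comparison for arbitrary $\psi$. Instead it takes the ergodic $\mu\in\mathcal G_\mathcal L$ normalised so that $\mu(\phi_r)=0$, recalls from Section~\ref{section:height_percolation} that the set $\{\phi<0\}$ almost surely does not percolate, and concludes that for any finite $\Lambda$ there is (a.s.) a circuit through $\{\phi\geq 0\}$ surrounding $\Lambda$. Conditioning on the exterior contour of that circuit produces a random domain $\Delta\supset\Lambda$ with a \emph{nonnegative} boundary condition; now the already-established monotonicity for nonnegative boundary data (exactly what Theorem~\ref{thm:FKG_main_results} delivers) gives $\mu_\Lambda(\phi_r^2)\leq\gamma_\Delta(\phi_r^2,0)\leq(\text{conditional expectation under }\mu)$, and averaging over contours yields $\alpha\leq\mu(\phi_r^2)<\infty$. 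This exploration step is the crucial idea missing from your proposal, and it is where the planarity hypothesis and the Section~\ref{section:height_percolation} percolation dichotomy enter; your route, which ignores planarity in this direction, would need a substantially stronger and currently unjustified input.
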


\begin{proof}
If $|\phi_x|=0$, then $\phi_x=0$.
Thus, the law of $|\phi|$ has the Markov property, but only for those circuits on which the height is equal to zero.
This is convenient, because $0$ is also minimal amongst the nonnegative numbers.
Write $\mu_\Lambda:=\gamma_\Lambda(\cdot,0)$,
and consider two finite subsets $\Lambda\subset\Lambda'\subset\mathbb V$.
Then the previous two observations, together with the FKG lattice condition for the absolute value,
imply that 
$|\phi|\preceq|\phi'|$ in the measures $\mu_\Lambda$ and $\mu_{\Lambda'}$ respectively.
In particular, the value of $\mu_\Lambda(\phi_r^2)=\mu_\Lambda(|\phi_r|^2)$ is increasing in $\Lambda$,
which proves that the limit in the display in the theorem exists in \([0,\infty]\) and is independent of the sequence chosen.

Suppose that this limit is finite.
Write $S$ for the set of subsequential limits of sequences of the form 
$(\mu_{\Delta_n})_n$ in the topology of local convergence,
where $(\Delta_n)_n$ denotes any sequence of finite subsets of $\mathbb V$
such that $\cup_n\cap_{k\geq n}\Delta_n=\mathbb V$.
Note that $S\subset\mathcal G$ by construction.
The arguments in the previous paragraph actually imply that $S$ is nonempty and that the law of $|\phi|$
is independent of the choice of the measure $\mu\in S$.
Note moreover that the set $S$ is shift-invariant,
in the sense that $\mu\circ\theta\in S$ for any $\mu\in S$ and $\theta\in\mathcal L$.
Write $C(S)$ for the closure of the convex hull of $S$
in the topology of local convergence.
Of course, the law of $|\phi|$ is independent of the choice of $\mu\in C(S)$.
This observation implies that the law of $\phi$ is tight over the set $C(S)$.
As a consequence, we derive that the set $C(S)$ is compact.
Obviously $C(S)\subset\mathcal G$.
Write $\mathcal L(n)$ for the set of vectors in $\mathcal L$ at a Euclidean distance 
at most $n$ from $0$.
Fix $\mu\in S$, write $\mu_n$ for the normalised probability measure
\[
	\frac1Z\sum_{\theta\in\mathcal L(n)}\mu\circ\theta,
\]
and note that $\mu_n\in C(S)$.
Therefore $C(S)$ contains some subsequential limit of the sequence $(\mu_n)_n$,
which is shift-invariant by design. 
This proves that the set $\mathcal G_\mathcal L$ is nonempty.

Suppose on the other hand that the set $\mathcal G_\mathcal L$ is not empty, and let $\mu$ denote the unique ergodic Gibbs 
measure with the property that $\mu(\phi_r)=0$.
In Section~\ref{section:height_percolation} we found that the set $\{\phi<0\}$ does not percolate almost surely
in this measure, and consequently we can find circuits surrounding arbitrarily large finite subsets of $\mathbb V$
 through the set $\{\phi\geq 0\}$ almost surely.
It is therefore easy to see that $|\phi'|\preceq|\phi|$ in the measures $\mu_\Lambda$ and $\mu$ respectively for
any finite $\Lambda\subset\mathbb V$.
In particular, the existence of the measure $\mu$ and log-concavity of the density of $\phi_r$
for this measure imply that the limit in the statement of the theorem
is bounded by $\mu(\phi_r^2)<\infty$.
\end{proof}

Consider now the localised case.
Somewhat mysteriously, we were able to demonstrate
in the previous proof that the set $S$ is nonempty 
and that the law of $|\phi|$ is the same for each measure in $S$,
but it is not at all obvious from the proof that the same holds true for
the distribution of the complete height function $\phi$.
In particular, we could not demonstrate that $S=\{\mu\}$
where $\mu$ is the unique ergodic Gibbs measure with $\mu(\phi_r)=0$,
and that
$\mu_{\Delta_n}\to\mu$ for any suitable sequence $(\Delta_n)_n$
in the topology of local convergence.
It should not come as a surprise that these stronger statements hold true as well.
To this end we provide the following argument, which is more constructive in nature than the proof above.

\begin{lemma}
\label{lemma:constructive_proof}
Assume the setting of Theorem~\ref{thm:fvdelocalisation},
and write $\mu_\Lambda:=\gamma_\Lambda(\cdot,0)$.
Suppose that the set $\mathcal G_\mathcal L$ is not empty.
Then for any sequence $(\Delta_n)_{n}$ of finite subsets of $\mathbb V$
	such that $\cup_{n\geq 0}\cap_{k\geq n}\Delta_k=\mathbb V$,
	we have $\mu_{\Delta_n}\to\mu$ in the topology of local convergence,
	where $\mu$ is the unique ergodic Gibbs measure with $\mu(\phi_r)=0$.
\end{lemma}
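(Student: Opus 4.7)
The plan is to establish $\mu_{\Delta_n}\to\mu$ in the local topology through three steps: tightness of the family, identification of each subsequential limit as a shift-invariant Gibbs measure with $\phi\mapsto-\phi$ symmetry and zero mean, and a Markov decomposition along ``zero circuits'' (circuits of vertices at height $0$) to pin the limit down to $\mu$. Tightness is an immediate consequence of the proof of Theorem~\ref{thm:fvdelocalisation}: the FKG lattice condition (Theorem~\ref{thm:FKG_main_results}) gives $\mu_{\Delta_n}(\phi_r^{2})\le\mu(\phi_r^{2})<\infty$ through log-concavity of the second moment. Any subsequential limit $\mu^{*}$ is then a Gibbs measure (DLR passes to the limit), shift-invariant (shifted exhausting sequences yield the same class of limits), and invariant under $\phi\mapsto-\phi$ (inherited from each $\mu_{\Delta_n}$); in particular $\mu^{*}\in\mathcal{G}_\mathcal{L}$ and $\mu^{*}(\phi_r)=0$.

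The heart of the argument is a Markov decomposition along zero circuits. Under $\mu$, the set $\{\phi<0\}$ does not percolate (Section~\ref{section:height_percolation}), and by the $\phi\mapsto-\phi$ symmetry of $\mu$ neither does $\{\phi>0\}$; combined with planarity this produces, almost surely, a zero circuit around any finite $\Lambda\subset\mathbb{V}$. The key observation is that such a zero circuit $C$ is measurable with respect to $|\phi|$ alone, and conditional on $\{C=c\}$ the DLR property forces $\phi|_{\mathrm{inside}(c)}$ to have law $\mu_{\mathrm{inside}(c)}$ under any Gibbs measure, hence under both $\mu$ and $\mu_{\Delta_n}$. The entire question thus reduces to convergence of the $|\phi|$-law, which I obtain by sandwiching: whenever $\mathrm{inside}(C)\subset\Delta_n$, the FKG lattice condition gives $|\phi|$ under $\mu_{\mathrm{inside}(C)}$ stochastically dominated by $|\phi|$ under $\mu_{\Delta_n}$, and together with the upper bound by $|\phi|$ under $\mu$ from the proof of Theorem~\ref{thm:fvdelocalisation}, a standard squeeze yields $\mu_{\Delta_n}(|\phi||_\Lambda)\to\mu(|\phi||_\Lambda)$ in distribution.

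To finish, fix finite $\Lambda$, $\varepsilon>0$, and a finite $U\supset\Lambda$ with $\mu(E_U^{c})<\varepsilon$, where $E_U$ is the event that some zero circuit around $\Lambda$ lies inside $U$. Take $C$ to be, say, the outermost such circuit on $E_U$; then $C$ is measurable with respect to $|\phi||_U$. The $|\phi|$-law convergence from the previous step gives $\mu_{\Delta_n}(\{C=c\})\to\mu(\{C=c\})$ for each of the finitely many admissible $c\subset U$. By DLR, $\mu_{\Delta_n}(A\cap\{C=c\})=\mu_{\Delta_n}(\{C=c\})\,\mu_{\mathrm{inside}(c)}(A)$ for any $A\in\mathcal{F}_\Lambda$, and identically for $\mu$. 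Summing over $c$, together with $\mu_{\Delta_n}(E_U^{c})\to\mu(E_U^{c})<\varepsilon$, gives $\limsup_n|\mu_{\Delta_n}(A)-\mu(A)|\le 2\varepsilon$; since $\varepsilon$ is arbitrary, convergence in the local topology follows.

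The main obstacle will be the almost-sure existence of zero circuits under $\mu$: one must combine the joint non-percolation of $\{\phi>0\}$ and $\{\phi<0\}$ with planar-graph topology in a way that avoids the two sign sets joining into a percolating $\{\phi\ne 0\}$ cluster. Once this geometric input is secured, everything else is a clean DLR calculation leveraging the fact that zero circuits are measurable with respect to $|\phi|$ alone.
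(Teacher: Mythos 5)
Your approach — reducing everything to a Markov decomposition along circuits where $\phi\equiv 0$, which are measurable with respect to $|\phi|$ alone — is genuinely different from the paper's. The paper instead exploits the Edwards–Sokal coupling: conditionally on $|\phi|$, the signs are an Ising model with couplings $K^{\xi}$, which vanish across any edge touching $\{\phi=0\}$; the joint law of $(|\phi|,\omega)$ (where $\omega$ is the associated random-cluster configuration) is shown to be increasing in $\Lambda$ and FKG, hence has a unique limit, and a single coin per $\omega$-cluster recovers the signs, with Burton--Keane plus the non-percolation result from Section~\ref{section:height_percolation} showing $\omega$ has no infinite cluster. That route never requires a circuit of exact zeros.

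The gap in your proposal is precisely the point you flag as ``the main obstacle,'' and it is not a formality: your entire scheme stands or falls on the almost-sure existence, under $\mu$, of a circuit of $\{\phi=0\}$ around any given finite $\Lambda$. The input from Section~\ref{section:height_percolation} is that neither $\{\phi>0\}$ nor $\{\phi<0\}$ percolates; but $\{\phi\neq 0\}$ is the \emph{union} of these two sets, and a positive cluster adjacent to a negative cluster merges into a single $\{\phi\neq 0\}$ cluster. Ruling out percolation of $\{\phi\neq 0\}$ is therefore a strictly stronger statement than anything proved in Section~\ref{section:height_percolation}, and it is exactly the statement you need to produce your zero circuits. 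The paper's Edwards--Sokal route sidesteps this: since each $\omega$-cluster carries a single sign, percolation of $\omega$ (a subset of the $\{\phi\neq 0\}$ edge set) really would force percolation of one of the two sign sets, whereas percolation of $\{\phi\neq 0\}$ itself would not. You would need an additional argument — not present in the paper's toolkit — to close this.

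There is a second, smaller problem in your intermediate step. You claim that a ``standard squeeze'' gives convergence of the law of $|\phi|\vert_{\Lambda}$ under $\mu_{\Delta_n}$ to that under $\mu$: the lower bound $\mu_{\mathrm{inside}(C)}$ is a \emph{fixed} volume and does not increase to $\mathbb{V}$ along the limit, while the upper bound $\mu$ is what you are trying to reach, so the two sides of the sandwich do not meet. Monotonicity and the one-sided domination $|\phi|\vert_{\mu_{\Delta_n}}\preceq |\phi|\vert_{\mu}$ only give that the limit exists and is dominated by $\mu$; a priori it could be strictly smaller. To get equality you in fact need the very zero-circuit decomposition you are building towards (condition $\mu$ on $\{C=c\}$ and use $\mu_{\mathrm{inside}(c)}\preceq\lim_n\mu_{\Delta_n}$ on $|\phi|$-events), so this step cannot be presented as an independent ``squeeze'' preceding the decomposition. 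Finally, your paragraph about subsequential limits being shift-invariant is not justified as written (only the \emph{set} of subsequential limits is shift-invariant, not each element), but since your final argument does not use it, that paragraph is merely superfluous rather than fatal.
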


\begin{proof}
	Consider the measure $\mu_\Lambda$ conditioned on $|\phi|=\xi$.
	Write $(\sigma_x)_{x\in\mathbb V}$ for the the family of signs of $\phi$,
	flipping a fair, independent coin for each vertex with height zero.
	Recall from Subsection~\ref{subsec:Ising} that the conditional law
	of $\sigma|_\Lambda$ is governed by a ferromagnetic Ising model
	with interaction strengths $K^\xi$.
	Recall also that $K^\xi$ is increasing in $\xi$,
	and observe that $K^\xi_{xy}=0$ whenever $\xi_x=0$ or $\xi_y=0$.

	For this conditional measure, one may exploit the connection with the random-cluster model (the Edwards-Sokal coupling) to sample some percolation configuration
	$\omega$
	such that the law of $\sigma$ is decided by flipping a fair coin for each connected component of $\omega$.
	Note that $\omega$ does not percolate in the measure $\mu_\Lambda$ since almost surely 
	$\omega\subset\mathbb E(\Lambda)$.
	By monotonicity of the random-cluster model, the law of $\omega$ is increasing in $K^{\xi}$, which in turn is increasing in \(\xi\).
	Thus, as the law of $|\phi|$ is increasing in $\Lambda$, we conclude that the joint law of the pair $(|\phi|,\omega)$ is increasing in $\Lambda$ and satisfies the FKG inequality for any $\Lambda$.

	As in the proof of Theorem~\ref{thm:fvdelocalisation}, one observes 
	that the joint law of $(\xi,\omega)$ in $\mu_{\Delta_n}$ tends to a unique limit 
	as $n\to\infty$,
	and this limit is ergodic, extremal, and satisfies the FKG inequality.
	The percolation process $\omega$ has at most one infinite cluster almost surely,
	due to the argument of Burton and Keane.
	Write $\mu$ for the measure defined by flipping a fair, independent coin for each cluster of $\omega$
	in order to to determine the sign of the height function $\phi$.
	Then $\mu$ must equal the unique ergodic Gibbs measure of the statement of this theorem, 
	due to uniqueness.
	In fact, we can prove that $\omega$ does not contain an infinite cluster
	almost surely, since percolation of $\omega$ would imply percolation of
	$\{\phi<0\}$ or $\{\phi>0\}$, which was ruled out almost surely in Section~\ref{section:height_percolation}.
\end{proof}

\section*{Acknowledgements}
The authors thank Hugo Duminil-Copin, Aernout van Enter, and Yvan Velenik for helpful comments and discussions.
The authors thank the anonymous referee for several helpful comments and suggestions for improvement.

The first author was supported by the ERC grant CriBLaM.
The second author was supported by the Swiss NSF through an Early Postdoc.Mobility grant and thanks the Roma Tre University for its hospitality.

\bibliographystyle{amsalpha}
\bibliography{SOS_Deloc_Bib,suggestions_aernout,new_papers}

\end{document}